\definecolor{dark-red}{rgb}{0.4,0.15,0.15}
\definecolor{dark-blue}{rgb}{0,0,0.7}
\newtheorem{theorem}{Theorem}
\newtheorem{conjecture}{Conjecture}[]
\newtheorem{corollary}{Corollary}[]
\newcommand{\N}{n}
\newcommand{\z}{\mathbf{z}}
\newcommand{\x}{\mathbf{x}}
\newcommand{\Z}{\mathbf{Z}}
\newcommand{\X}{\mathbf{X}}
\newcommand{\U}{\mathbf{U}}
\newcommand{\uu}{\mathbf{u}}
\newcommand{\mz}{\mathcal{Z}_\alpha^\mu}
\newcommand{\nz}{\overline{\mathcal{Z}}_\alpha^\mu}
\newcommand{\definedas}{\overset{\underset{\mathrm{def}}{}}{=}}
\begin{document}
    
\title{A New Confidence Interval for the Mean of a Bounded Random Variable}

\author{Erik~Learned-Miller and Philip S. Thomas\\College of Information and Computer Sciences\\University of Massachusetts\\\{elm,pthomas\}@cs.umass.edu}

\date{}

\maketitle

\begin{abstract}
    We present a new method for constructing a confidence interval for the mean of a bounded random variable from samples of the random variable. We conjecture that the confidence interval has guaranteed coverage, i.e., that it contains the mean with high probability for all distributions on a bounded interval, for all samples sizes, and for all confidence levels. This new method provides confidence intervals that are competitive with those produced using Student's $t$-statistic, but does not rely on normality assumptions.  In particular, its only requirement is that the distribution be bounded on a known finite interval.
\end{abstract}
\section{Introduction}

Consider one of the fundamental problems in statistics: how to use $n$ samples of a real-valued random variable to obtain a confidence interval on its mean. 
Methods for constructing such confidence intervals are used across all branches of science. 
In the natural and social sciences, the confidence interval based on Student's $t$-statistic \citep{student1908probable} is one of the standard tools for quantifying uncertainty about the results of an empirical study. 
In theoretical work, concentration inequalities like Hoeffding's inequality \citep{Hoeffding1963} are often used to analyze properties of algorithms in machine learning, data science, and other areas. 
Providing methods for obtaining tighter confidence intervals from fewer samples is critical to scientific advancement, enabling stronger conclusions to be drawn from the same experimental data. 

In particular, there is a practical need today for confidence intervals that hold for small sample sizes. 
Since the confidence interval produced using Student's $t$-statistic, which we refer to hereafter as the \emph{Student-$t$ interval}, relies on the (near) normality of the sample mean, it is recommended that sample sizes be at least 30 for it to be used, unless there is
a specific reason to believe that the population distribution is approximately normal. 
While other confidence intervals that hold for small
sample sizes exist (such as Anderson's~\citeyearpar{Anderson1969}), they produce intervals that are so wide as to be of little use in practice. This
leaves the practitioner with the following choices:
\begin{itemize}
    \item use methods, such as bootstrap methods, with no performance guarantees;
    \item use methods with unrealistic assumptions, such as the Student-$t$ interval;
    \item use valid but weak methods such as Hoeffding or Anderson's inequalities that provide little information about the mean;
    \item abandon the idea of obtaining useful confidence intervals from the data.
\end{itemize}
In this paper, we introduce a new confidence interval for bounded distributions that is much tighter than other confidence intervals
that come with guarantees. We conjecture that it holds for all bounded distributions, all samples sizes, and all confidence levels. 
We suggest that for many applications, this is the first practical confidence interval for sample sizes less than 30. We now offer a formal statement of the problem we are addressing.

We became aware of similar previous work after the initial publication of this paper \citep{gaffke2005three}. We thank Aaditya Ramdas for bringing it to our attention.

\subsection{Problem Statement}
Let $X_1,\dotsc,X_\N$ be $\N$ independent and identically distributed real-valued random variables. 
Let each $X_i$ take values in the interval $[0,1]$ and have expected value $\mu$. 
For now we focus on a high-confidence \emph{upper} bound, i.e., we desire a function $m_\alpha$ such that, for all distributions of $X_i$, all sample sizes $n\geq 1$, and all confidence levels $1-\alpha \in [0,1]$:
\begin{equation}
    \label{eq:mainResult}
    \Pr\left ( m_\alpha(X_1,\dotsc,X_\N) \geq \mu \right ) \geq 1-\alpha.
\end{equation} 
That is, with probability at least $1-\alpha$, $m_\alpha(X_1,\dotsc,X_\N)$ should be greater than or equal to the mean. 
Critically, in this statement the random quantity is the high-confidence upper bound, not the mean. 
Any definition of $m_\alpha$ that satisfies \eqref{eq:mainResult} for all bounded distributions,  samples sizes $n\geq 1$, and $1-\alpha \in [0,1]$, is said to have \emph{guaranteed coverage}.

In this paper we present a new method for constructing confidence intervals on the mean: a new $m_\alpha$. 
We conjecture that our function $m_\alpha$  satisfies \eqref{eq:mainResult}, i.e., that it has guaranteed coverage. 
If our conjecture holds, this is the first confidence interval with tightness comparable to the Student-$t$ interval, but with guaranteed coverage in this setting. 
We prove in Section~\ref{sec:related_work} that it {\em dominates} several other known confidence intervals with guaranteed coverage. 
That is, for every possible sample $(x_1,x_2,...,x_n)$, it produces a confidence interval with width less than or equal to these previous methods, often with a much smaller width. This makes our confidence interval suitable for small sample sizes where other methods are not practical. 

After defining $m_\alpha$ in the next section, we present the following results:
\begin{itemize}
    \item a proof of 
    \eqref{eq:mainResult} for a class of distributions that includes Bernoulli distributions, for all samples sizes $\N$, and for all confidence levels $1-\alpha$;
    \item the sketch of a proof that our intervals are always at least as tight as those provided by \citet{Anderson1969}, which, in turn, are strictly tighter than those of \citet{Hoeffding1963};
    \item results of extensive simulations on a wide variety of distributions that are consistent with
    \eqref{eq:mainResult} for many sample sizes and confidence intervals;
    \item empirical comparisons (through Monte Carlo simulations) with previous methods, demonstrating that the confidence intervals produced by $m_\alpha$ are consistently tighter than or as tight as the intervals produced by existing methods.
\end{itemize}

\section{A New Confidence Interval for the Mean}

In this section we present our new confidence interval. We also present our conjecture that it holds for all distributions bounded on $[0,1]$, for all sample sizes, and for all confidence levels. 

Let $\X\definedas (X_1,X_2,\dotsc,X_\N)$. 
Let $\Z\definedas (Z_1,Z_2,\dotsc,Z_\N)$ be the \emph{order statistics} of $\X$, i.e., $\Z$ is a vector containing the sorted values of $\X$ such that $Z_1\leq Z_2\leq\cdots\leq Z_\N$. 
Let $\z\definedas (z_1,z_2,...,z_\N)$ denote a particular sample of $\Z$ and $\x\definedas (x_1,x_2,\dotsc,x_\N)$ a sample of $\X$. 
For notational convenience, we alternate between viewing $m_\alpha$ as a function of $\z$ or $\mathbf x$. 
So, when we write $m_\alpha(\z)$ subsequently, this corresponds to a definition of $m_\alpha(\mathbf x)$ where $\z$ are the order statistics of $\mathbf x$.

Let $\U$ 
be the order statistics of a sample of size $\N$ from the continuous uniform distribution on $[0,1]$,
with $\uu\definedas (u_1,u_2,\dotsc,u_\N)$ being a particular sample of $\U$. 
Since $\uu$ are order statistics, 
$0\leq u_1\leq u_2 \leq ...\leq u_\N \leq 1$. 
We define a function of two ordered vectors: 
\begin{equation}
m(\z,\uu)\definedas 1-\sum_{i=1}^\N u_i(z_{i+1}-z_i),
\end{equation}
where $z_{n+1}\definedas 1$. 
Let $Q(1-\alpha,Y)$ be the {\em quantile function} of the scalar random variable $Y$, i.e.,
\begin{equation}
    \label{eq:quantileDefn}
    Q(1-\alpha,Y)\definedas \inf \{y\in \mathbb{R}: F_Y(y)\geq 1-\alpha\},
\end{equation}
where $F_Y(y)$ is the cumulative distribution function (CDF) of $Y$.

Consider the random quantity $m(\z,\U)$, which depends upon a fixed sample $\z$ (non-random) and
also on the random variable $\U$. 
We define $m_\alpha(\z)$ to be the $(1-\alpha)$-quantile of 
$m(\z,\U)$, i.e.,
\begin{equation}
    \label{eq:randomEq1}
m_\alpha(\z) \definedas Q(1-\alpha,m(\z,\U)).
\end{equation} 
We conjecture that this definition of $m_\alpha$ satisfies \eqref{eq:mainResult}: 

\begin{center}
\noindent\fbox{\begin{minipage}{\textwidth-1cm}
    \begin{conjecture}
    \label{thm:TLM_upper}
    Let $\X=(X_1,\dotsc,X_\N)$ be $\N$ independent and identically distributed random variables bounded in the interval $[0,1]$, each with mean $\mu$. Let $\Z=(Z_1,\dotsc,Z_\N)$ be the order statistics of $\X$. Then for all $\alpha \in [0,1]$\emph{:}
    \begin{equation}
    \Pr\left ( m_\alpha(\Z) \geq \mu \right ) \geq 1-\alpha,
\end{equation} 
where $m_\alpha$ is defined in \eqref{eq:randomEq1}.
\end{conjecture}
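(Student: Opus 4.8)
\emph{Proof proposal.} The plan is to reduce the claim to a one-dimensional stochastic-dominance statement and then to combine a deterministic ``sandwich'' inequality with the probability integral transform. First I would unwind the quantile in \eqref{eq:randomEq1}: using \eqref{eq:quantileDefn}, for fixed $\z$ the event $m_\alpha(\z)\ge\mu$ is equivalent, modulo boundary cases in the quantile definition, to $\Pr_{\U}\!\left(m(\z,\U)\ge\mu\right)\ge\alpha$. Writing $h(\z)\definedas\Pr_{\U}(m(\z,\U)\ge\mu)$, the target \eqref{eq:mainResult} becomes
\begin{equation}
\Pr_{\Z}\big(h(\Z)\ge\alpha\big)\ge 1-\alpha,
\end{equation}
i.e.\ it suffices to show that $h(\Z)$ \emph{stochastically dominates} a uniform random variable on $[0,1]$. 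A convenient reformulation of $m$ comes from summation by parts: setting $u_0\definedas 0$ and recalling $z_{n+1}=1$, one obtains $m(\z,\uu)=\sum_{j=1}^{\N+1}S_j z_j$, where $S_j\definedas u_j-u_{j-1}$ are the $\N+1$ \emph{spacings} of $\uu$. Since the spacings of uniform order statistics are exchangeable (Dirichlet$(1,\dots,1)$ on the simplex), $m(\z,\U)$ is a Dirichlet-weighted average of $z_1,\dots,z_\N,1$, and the ``good region'' $G(\z)\definedas\{\uu:m(\z,\uu)\ge\mu\}$ is the down-set cut out by the single linear constraint $\sum_i u_i(z_{i+1}-z_i)\le 1-\mu$.

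The engine of the argument is a deterministic bound obtained by plugging the \emph{true} CDF values into $m$. Using $\mu=1-\int_0^1 \F(x)\,dx$ (integration by parts for a variable on $[0,1]$) and the monotonicity of $\F$, a lower Riemann sum gives, for every ordered $\z$,
\begin{equation}
\sum_{i=1}^{\N}\F(z_i)(z_{i+1}-z_i)\;\le\;\int_0^1\F(x)\,dx,\qquad\text{hence}\qquad m\big(\z,\F(\z)\big)\ge\mu,
\end{equation}
where $\F(\z)\definedas(\F(z_1),\dots,\F(z_\N))$. Thus the ``true'' calibration point $\uu^\star\definedas \F(\Z)$ lies in $G(\Z)$ \emph{surely}; this step uses only monotonicity, so it is valid for arbitrary bounded $\F$, including atoms. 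The second ingredient is the probability integral transform: when $\F$ is continuous, $\F(\Z)$ has exactly the law of the uniform order statistics $\U$, so the surely-feasible point $\uu^\star$ is distributed like the independent calibration sample used to define the quantile.

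The crux---and what I expect to be the main obstacle---is converting ``$\uu^\star\in G(\Z)$ surely, with $\uu^\star\overset{d}{=}\U$'' into stochastic dominance for $h(\Z)$, given that the quantile in \eqref{eq:randomEq1} is computed with an \emph{independent} copy of $\U$ rather than the copy coupled to $\Z$. The naive route---bounding $h(\z)$ below by the probability of the box $\{\uu\preceq\uu^\star\}\subseteq G(\z)$---is tight only for $\N=1$, where it collapses the whole claim to Markov's inequality applied to $1-X$, but is far too lossy for $\N\ge 2$, since $G(\z)$ is a large half-space and componentwise domination is rare. The honest plan is therefore twofold. First, I would attempt the required domination directly, exploiting the exchangeability of the $\N+1$ spacings and a symmetry between the ``generating'' randomness (which produces $\Z$) and the ``calibrating'' randomness (which produces $\U$), hoping to show that this symmetry makes $h(\Z)$ exactly uniform in the continuous case, mirroring the $\N=1$ computation. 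Failing a fully general argument, I would fall back on a worst-case reduction: argue that, for fixed $\mu$ and $\N$, the coverage $\Pr(h(\Z)\ge\alpha)$ is minimized by finitely-supported laws and verify the bound there---precisely the restricted regime (Bernoulli and related discrete distributions) in which the paper reports a complete proof---leaving the passage from the extremal family to all bounded distributions as the content of the conjecture. Non-continuous $\F$ would be handled via right-continuity and the observation that discreteness only makes $\F(\Z)$ stochastically larger, which can only help; the genuinely open difficulty is the $\N\ge2$ transfer from the coupled to the independent calibration sample.
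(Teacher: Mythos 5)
Before anything else, be clear about what you are being compared against: the statement is labeled a \emph{conjecture}, and the paper does not prove it. What the paper proves (Section~\ref{sec:Bernoulli}) is only the special case of Bernoulli and half-Bernoulli distributions, by a direct computation over that two-point family: enumerate the $\N+1$ possible order-statistic vectors $\z_{j,\N}$, use monotonicity \eqref{eq:monotonic_bound} to show the infeasible samples are exactly those with at least $j_\text{min}$ copies of $k$, write $\Pr(\Z\in\nz)$ as a binomial tail equal to $\beta_\text{cdf}(p_k; j_\text{min},\N-j_\text{min}+1)$ via \eqref{eq:ProbAsBeta}, exploit the collapse $m(\z_{j,\N},\U)=1-(1-k)U_j$ of \eqref{eq:simple}, and maximize over $p_k$ subject to $j_\text{min}=j$; the maximizer $p_k=\beta_\text{cdf}^{-1}(\alpha;j,\N-j+1)$ yields a failure probability of exactly $\alpha$. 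Your proposal is likewise not a complete proof, and you say so explicitly, so the honest comparison is between two partial arguments.

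Your verified steps are correct and follow a genuinely different route from the paper's. The quantile unwinding (modulo the strict-versus-weak boundary case you flag), the reformulation of \eqref{eq:mainResult} as stochastic dominance of $h(\Z)$ over a uniform variable, the Dirichlet/spacings view, and especially the deterministic lemma $m(\z,\F(\z))\geq\mu$ via lower Riemann sums combined with the probability integral transform: none of this appears in the paper, and all of it is sound. The two partial approaches are in fact complementary: your machinery is exact precisely when $\F$ is continuous (so that $\F(\Z)$ has the law of $\U$), whereas the paper's proof covers only two-point, maximally discrete laws, where the probability integral transform fails and your ``discreteness can only help'' remark is an unproven heuristic. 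One warning: your hope that symmetry makes $h(\Z)$ \emph{exactly} uniform in the continuous case is false already for $\N=1$ with a uniform parent, where $h(Z)=\min\{1,(1-\mu)/(1-Z)\}$ strictly dominates the uniform distribution; only dominance, not equality, can be the goal.

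The gap you identify is real, and it is precisely why the statement remains a conjecture. Sure membership of the \emph{coupled} point $\F(\Z)$ in the random region $G(\Z)$, together with equality in law between $\F(\Z)$ and $\U$, does not by itself lower-bound $\Pr_\U\left(\U\in G(\Z)\right)$ for an \emph{independent} $\U$: abstractly, a set-valued map $\z\mapsto G(\z)$ can always contain a designated coupled copy while having arbitrarily small independent-copy probability (take $G(\z)$ to be a tiny neighborhood of $\F(\z)$, so that $h(\Z)$ is nearly zero surely). Any correct argument must therefore use the half-space geometry of $G(\z)$, and neither you nor the paper shows how to do so for general distributions. Your fallback is also not available from the paper: the paper proves no reduction from arbitrary bounded laws to an extremal finitely-supported family; it simply verifies the half-Bernoulli family directly, and the missing reduction is exactly the open content of the conjecture. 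So your proposal should be read as a promising reformulation plus two correct lemmas---material the paper does not contain---but not as a proof, and not as a recovery of the paper's own partial result either, since completing even the half-Bernoulli case within your framework would still require the paper's direct beta-quantile computation or an equivalent.
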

    \end{minipage}}
    \end{center}

Several extensions of this conjecture are apparent. 
First, since each $X_i$ is bounded above by $1$, this conjecture implies that $1-m_\alpha(1-\z)$ is a $(1-\alpha)$-confidence \emph{lower} bound on $\mu$. 
Second, if our main conjecture holds, we further conjecture that the assumption that the random variables are in $[0,1]$ can be extended to $(-\infty,1]$, or $[0,\infty)$ for the high-confidence lower bound. 
Furthermore, the deterministic upper bound of $1$ can be loosened to only require an almost-sure upper bound of $1$. 
Although these extensions may be important for some applications, hereafter we focus on the basic setting introduced previously.

\section{Understanding $m_\alpha(\z)$}
Our high-confidence bound (for brevity, hereafter we refer to it as simply our bound) is given by the function $m_\alpha$ defined above. In this section we introduce the following concepts, which provide intuition for $m_\alpha$: 
\begin{itemize}
    \item {\em ordered CDF  pairs},
    \item the {\em conservative completion} of a set of ordered CDF pairs,
    \item the {\em induced mean} of a set of ordered CDF pairs, via conservative completion. 
\end{itemize}
\subsection{Ordered CDF pairs}
For any order statistic vector $\z$, each element of $\z$ can be paired with an element from a non-decreasing sequence of numbers, $u_1, u_2, ..., u_\N$, to form $\N$ pairs:
\begin{equation}
(z_1,u_1),(z_2,u_2),...,(z_\N,u_\N).
\end{equation}
Assuming the $u$'s are all in the interval $[0,1]$ (as is the case if $\uu$ is a sample of $\U$), these pairs can be viewed as points on a CDF $F$, i.e., $u_i=F(z_i)$. 
For this reason, we refer to these $n$ pairs as \emph{ordered CDF pairs}, and write $(\z,\uu)$ to denote such a set of ordered CDF pairs.  
We say that a set of ordered CDF pairs is \emph{consistent} with a CDF $F$ if $u_i=F(z_i)$ for all $i \in \{1,2,\dotsc,\N\}$. 
Notice that a set of ordered CDF pairs is consistent with many (usually infinitely many) different CDFs---all non-decreasing functions on the interval $[0,1]$ that pass through these $n$ points (see Figure~\ref{fig:orderedPairs}). 

\begin{figure}[htbp]
 \begin{center}
   \includegraphics[trim=0 100 0 160,clip,width=5.in]{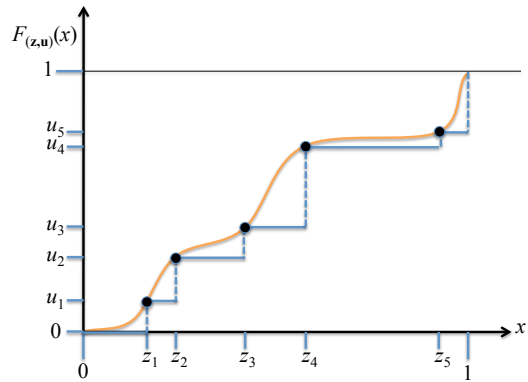}
    \caption{\label{fig:conservativeCompletion} Given a sample $\z$ and a vector $\mathbf{u}$ of sorted uniform samples, the ordered CDF pairs (black points) are compatible with a large family of CDFs. Two of the CDFs compatible with these points are shown, a smooth orange one, and a stairstep blue one. The blue one represents the CDF $F_{(\z,\uu)}(x)$, which has the greatest mean among all such CDFs, since it puts mass ``as far right'' as possible in a way that is still compatible with the ordered CDF pairs. We refer to this CDF as the {\em conservative completion} of the ordered CDF pairs $(\z,\mathbf{u})$. }
    \label{fig:orderedPairs}
\end{center}
\end{figure}
\subsection{Conservative Completion of Ordered CDF Pairs}

Given a set of ordered CDF pairs, one may ask which of the (usually infinitely many) CDFs that are consistent with the ordered CDF pairs represents the distribution with the greatest mean, and is this CDF unique?
This CDF \emph{is} unique, and we refer to it as the \emph{conservative completion} of the ordered CDF pairs. 
That is, the mean of the distribution characterized by the conservative completion represents an upper bound on the mean of any distribution consistent with the set of ordered CDF pairs.

The conservative completion for a set of ordered CDF pairs, $(\z,\mathbf{u})$, is illustrated in Figure~\ref{fig:conservativeCompletion}. 
It is given by the CDF:
\begin{equation}
F_{(\z,\uu)}(x)\definedas
\begin{dcases}
    0,& \text{for } x < z_1\\
    u_1,& \text{for } z_1 \leq x < z_2\\
    u_2,& \text{for } z_2 \leq x < z_3\\
    ...,& ...\\
    u_\N,& \text{for } z_\N \leq x < 1\\
    1,& \text{for } x\geq1.
\end{dcases}
\end{equation}

\begin{figure}[htbp]
 \begin{center}
   \includegraphics[trim=0 100 0 160,clip,width=5.in]{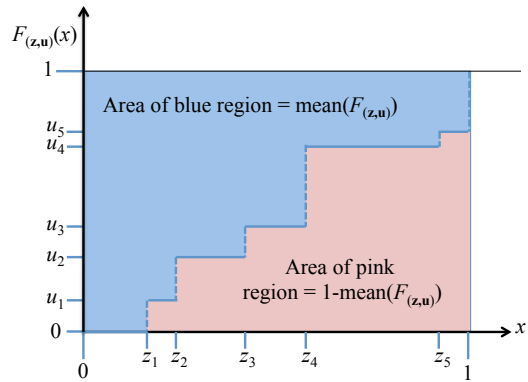}
    \caption{\label{fig:inducedMean} For CDFs defined on the interval $[0,1]$, the mean of the distribution characterized by $F_{(\z,\uu)}(x)$ is given by the area of the region above the CDF (blue), or one minus the area of the region below the CDF (pink).}
\end{center}
\end{figure}

\subsection{The Induced Mean, $m(\z,\uu)$}
We introduce $m(\z,\uu)$ to represent the mean of the distribution characterized by $F_{(\z,\uu)}(x)$. 
This quantity is, for distributions over $[0,1]$, equivalent to the area of the region above the CDF, as depicted in Figure~\ref{fig:inducedMean}.  The geometry of this figure suggests two methods for calculating this mean. The first, derived by decomposing the blue in Figure~\ref{fig:inducedMean} into a set of horizontal strips, is
\begin{equation}
m(\z,\uu)=\sum_{i=1}^{\N+1} z_i(u_i-u_{i-1}),
\end{equation}
where $u_0 \definedas 0$, $u_{\N+1}\definedas 1$, and $z_{\N+1}\definedas1$. 
Another formula is given by dividing the pink region into 
vertical strips, and is given by 
\begin{equation}
\label{eq:UdotS}
m(\z,\uu)=1-\sum_{i=1}^\N u_i(z_{i+1}-z_i).
\end{equation}
Next, we consider the distribution of such means obtained by allowing $\uu$ to vary in a particular fashion.
\subsection{A Distribution of Induced Means}

Recall that $\U$ is a random vector containing $\N$ samples from the continuous uniform distribution on $[0,1]$, sorted such that $0\leq U_1\leq U_2\leq ...\leq U_\N\leq 1$.
We now consider a distribution of induced means obtained by replacing the fixed $\uu$ in $m(\z,\uu)$ with a random vector $\U$ to form a new scalar random variable $m(\z,\U)$.

Recall the definition of the quantile function from \eqref{eq:quantileDefn}. 
We define $m_\alpha(\z)$ to be the $(1-\alpha)$-quantile of the random variable $m(\z,\U)$, i.e.,
\begin{equation}
m_\alpha(\z) \definedas Q(1-\alpha,m(\z,\U)).
\end{equation}
Thus, $m_\alpha(\z)$ considers the set of all $\uu$'s that can be used to form an ordered CDF pair with a particular sample $\z$ and chooses the $(1-\alpha)$-quantile of the resulting induced means.
As we shall see, this turns out to be just ``conservative enough'' to provide a valid high-confidence bound for Bernoulli-like distributions, and appears to be looser for distributions that are not Bernoulli-like.

\section{The Order Statistic Simplex and Feasible Set}
In this section, we define the {\em order statistic simplex} and the notion of {\em feasible} and {\em infeasible sets} of samples of order statistics. These definitions will be used in Section~\ref{sec:Bernoulli} to prove that our bound holds for all Bernoulli distributions and also for a more general set of Bernoulli-like distributions.

\subsection{A Conditional Analysis}
Our general method of proof (in the next section) will rely on a conditional analysis for a specific set of distributions. In particular, we will analyze our bound specifically for 
\begin{itemize}
    \item a fixed sample size $\N$,
    \item a subset of the distributions with a specific mean, $\mu$,
    \item a specific confidence level, $1-\alpha$ (or equivalently, failure rate $\alpha$).
\end{itemize}
If we can show that the bound,~\eqref{eq:mainResult}, holds for each tuple $(\N,\mu,1-\alpha)$, then we have a complete proof for the set of distributions under consideration.

Before proceeding with this method of proof, we define a few necessary terms.
\subsection{The Order Statistic Simplex}
\label{sec:orderStatSimplex}
Consider the {\em order statistic simplex} in $\N$ dimensions---the set of all possible order statistic vectors, $\z$, which forms a polytope of dimension $\N$ with $\N+1$ vertices, i.e., a simplex.
For distributions on $[0,1]$, we define the order statistic simplex as:
\begin{equation}
\mathcal{Z}\definedas \big \{\z=(z_1,z_2,...,z_\N): 0\leq z_1\leq z_2...\leq z_\N\leq 1 \big \}.
\end{equation}
The order statistic simplex for $\N=2$ is depicted by the blue region in Figure~\ref{fig:orderStatSimplex}.

\begin{figure}
 \begin{center}
    \includegraphics[trim=0 100 0 160,clip,width=6.in]{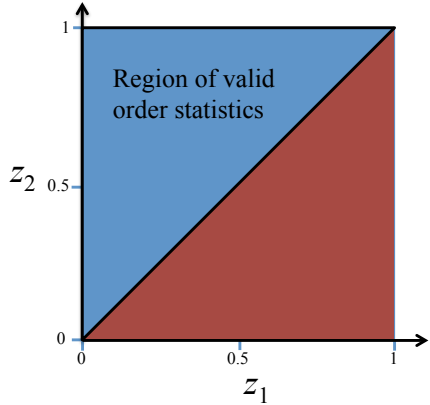}
    \caption{\label{fig:orderStatSimplex} The order statistic simplex in $\N=2$ dimensions. The upper left region (blue) shows the region of possible (valid) order statistics for a sample of size $\N=2$. For other $\N$, this region is defined by a polytope of $\N$ dimensions and $\N+1$ vertices, i.e., a simplex. We refer to this as the {\em order statistic simplex} in $\N$ dimensions. }
\end{center}
\end{figure}

\begin{figure}
 \begin{center}
 \vspace{-1in}
   \includegraphics[width=3.8in,height=3.8in]{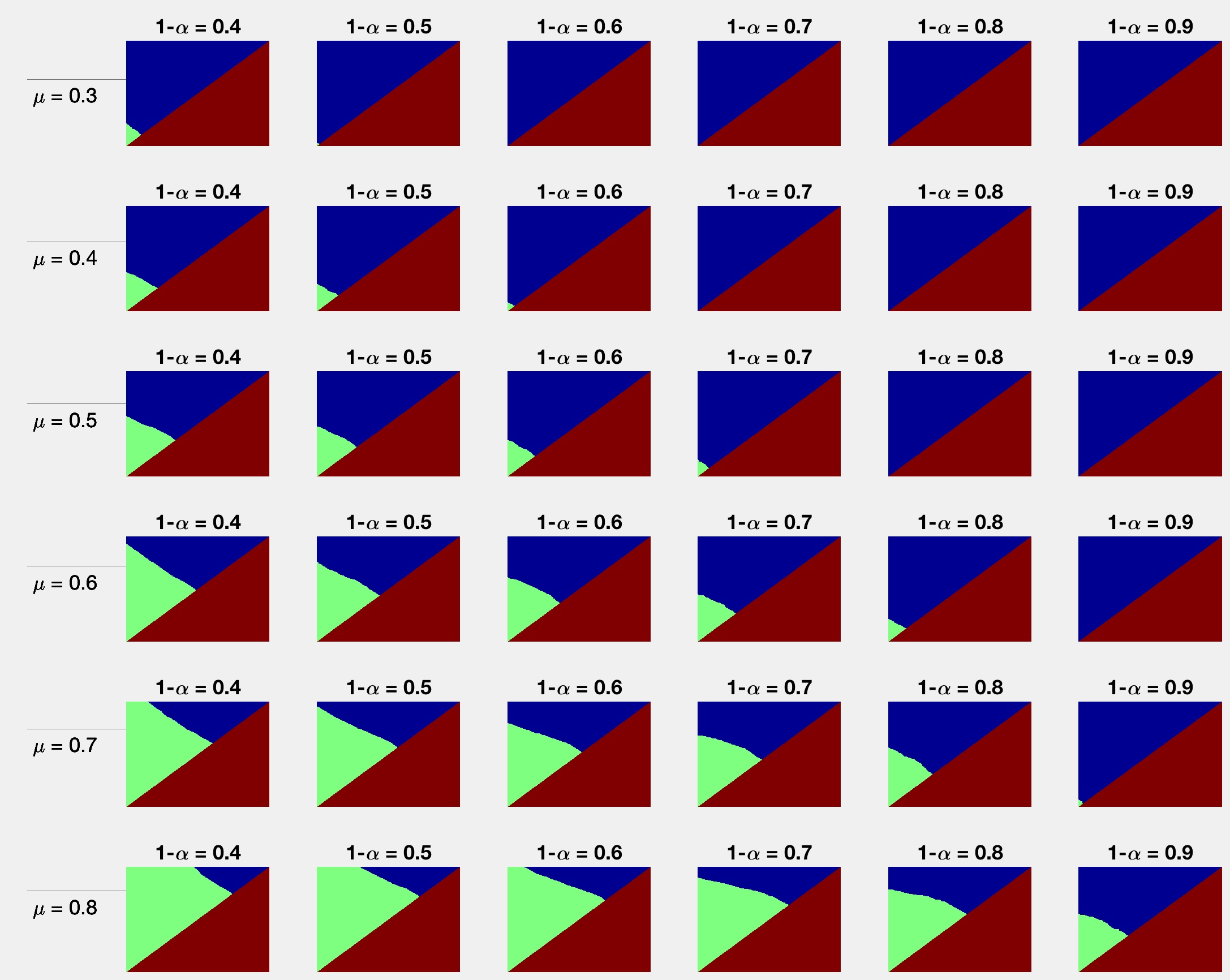}
    \caption{\label{fig:znot} Feasible and  
    infeasible regions for $\N=2$. For various values of the confidence level $1-\alpha$ and the mean $\mu$, we show the {\bf feasible regions} in blue (for which the bound is greater than or equal to the mean) and the {\bf infeasible regions} in green (for which the bound is less than the mean).}
\end{center}
\end{figure}
\subsection{The Infeasible Set of $\z$'s}
Let the sample size, $\N$, be fixed. 
For distributions with a specific mean, $\mu$, and for a specific confidence level, $1-\alpha$, let $\mz \subseteq \mathcal{Z}$ denote the set of $\z$'s for which $m_\alpha(\z)\geq\mu$. 
We refer to $\mz$ as the \emph{feasible set}, and say that $\z$ \emph{satisfies the bound} if $\z \in \mz$. 
Let $\nz$ denote the complement of this set, the set of $\z$'s for which $m_\alpha(\z)<\mu$.
We refer to $\nz$ as the {\em infeasible set} of $\z$'s, and say that $\z$ \emph{does not satisfy the bound} if it is in $\nz$. 
Note that, given the mean, $\mu$, of a distribution, the feasible and infeasible sets have no other dependency on the unknown distribution of $\X$. 

These ideas are illustrated in Figure~\ref{fig:znot}. For sample size $\N=2$, each plot shows the feasible (blue) and infeasible (green) regions for given confidence levels $1-\alpha$ and means $\mu$. Each row shows results for the same $\mu$ and different confidence levels. Note that in some cases, such as $1-\alpha= 0.6$ and $\mu=0.3$, the entire order statistic simplex is feasible (there are no green pixels).

\section{Bernoulli and Half-Bernoulli Distributions}
\label{sec:Bernoulli}
In this section, we present a proof of our conjecture for Bernoulli distributions and for a generalization of Bernoullis that we refer to as {\em half-Bernoulli} distributions. 
While Bernoulli distributions have point masses on both 0 and 1, half-Bernoullis can have point masses at two positions: $k$ and $1$, where $0\leq k <1$. 
Thus, they are a generalization of Bernoullis that allow the lower value to be any non-negative value less than 1. 
Let $H_{k,\mu}$ be the half-Bernoulli distribution where the point masses are at $k$ and $1$, and the mean is $\mu$. 
With $k$ and $\mu$ specified by $H_{k,\mu}$, the probability $p_k$ of sampling $k$ is given by 
\begin{equation}
    p_k=\frac{1-\mu}{1-k}.
\end{equation}

Bernoullis (and half-Bernoullis) are prime candidates for distributions for which the bound will fail (violate \eqref{eq:mainResult}), since it is not uncommon to have a sample of all 0's (or all $k$'s), despite having a relatively large mean. For example, the probability of obtaining a sample of $[0,0,0,0]$ from a Bernoulli distribution with parameter $p=0.5$ is $0.5^4=0.0625$. Since the probability of getting this sample is greater than $0.05$, the bound must produce a  result greater than $\mu=0.5$ for this sample at the confidence level $1-\alpha=0.95$.
As we demonstrate below, the bound holds for all Bernoulli and half-Bernoulli distributions. 

\subsection{Finding Worst Case Half-Bernoullis for $(\N,\mu,1-\alpha)$}
Our approach will be to find ``worst case'' distributions  among the set of half-Bernoulli distributions. 
By worst case, we mean that the probability of drawing a sample $\z$ for which the bound fails is as high as possible. 
We will show that for the worst case half-Bernoulli distributions (there can be more than one of these for each tuple $(\N,\mu,1-\alpha)$, the probability of drawing a sample $\z\in \nz$ is no more than $\alpha$. 
Since the bound holds for the worst case half-Bernoullis, it must hold for all half-Bernoullis.

\subsubsection{Enumerating possible $\z$'s}

Consider a half-Bernoulli distribution with probability masses at $k$ and $1$, and with mean $\mu$. 
For a given $\N$, there are $\N+1$ possible order statistics, $\z$:
\begin{align}
&[k, k, \dotsc, k, k]\\
&[k, k, \dotsc, k, 1]\\
&[k, k, \dotsc, 1, 1]\\
& \hspace{1cm}\vdots\\
&[k, 1, \dotsc, 1, 1]\\
&[1, 1, \dotsc, 1, 1].
\end{align}
Let $\z_{j,\N}$ be the sample $\z$ with $j$ out of $\N$ values of $k$, i.e., for $j \in \{0,1,\dotsc,\N\}$:
\begin{equation}
    \z_{j,\N}\definedas[\underbrace{k,\dotsc,k}_{j},\underbrace{1,\dotsc,1}_{n-j}].
\end{equation}
\subsubsection{Monotonicity of $m_\alpha(\z)$}
\label{sec:mono}
Notice that $m_\alpha$ is monotonic in the following sense. 
For two samples $\mathbf{y}$ and $\z$:
\begin{equation}
\label{eq:monotonic_bound}
{\tt If}\; \forall i, \, y_i\leq z_i \;{\tt then }\; m_\alpha(\mathbf{y})\leq m_\alpha(\z).
\end{equation}
It follows from \eqref{eq:monotonic_bound} that $m_\alpha(\z_{i,\N})\leq m_\alpha(\z_{j,\N})$ whenever $i<j$.

\subsubsection{An expression for $\Pr(\Z\in \nz)$}
For any half-Bernoulli distribution $H_{k,\mu}$, sample size $\N$, and confidence level $1-\alpha$, let
\begin{equation}
j_\text{min}(H_{k,\mu},1-\alpha,\N)=\min\big\{j \in \{0,1,\dotsc,\N\}: \z_{j,\N}\in \nz\big\},
\end{equation}
where $j_\text{min}(H_{k,\mu},1-\alpha,\N)=\N+1$ if $\z_{\N,\N} \in \mz$. 

For example, suppose $\N=5$ and $[k,k,k,1,1]\in \nz$ but $[k,k,1,1,1]\in \mz$. Then $j_\text{min}=3$, where here and in the following the arguments of $j_\text{min}$ are implicit. By the monotonicity of the bound 
(see \eqref{eq:monotonic_bound}),
all of the samples with ${\tt count}(k)\geq j_\text{min}$ will be in $\nz$. 

For a given half-Bernoulli distribution we can now write an expression for the probability that $\Z$ will not satisfy the bound:
\begin{align}
\Pr_{H_{k,\mu}}\left (\Z \in \nz\right ) =& 
\sum_{i=j_\text{min}}^\N \Pr_{H_{k,\mu}} \left (\Z=\z_{i,n}\right )\\ 
=& \sum_{i=j_\text{min}}^\N  \operatorname{Binomial}(i; \N, p_k)\\
\label{eq:ProbAsBeta}
=& \beta_\text{cdf}(p_k; j_\text{min},\N-j_\text{min}+1),
\end{align}
where $\beta_\text{cdf}(x;a,b)$ is the CDF of a beta distribution with parameters $a$ and $b$. 
The above derivation uses the property that each $\z_{i,\N}$ can be viewed as a sample from a binomial distribution, 
and in the last step we use a well-known identity that relates the sum of binomials to the CDF of a beta distribution.

\subsubsection{Simplification  of $m(\z_{j,\N})$ due to the simple structure of $\z_{j,\N}$}
Before continuing with deriving the $p_k$ that maximizes the failure rate of the bound, we show how $m(\z,\U)$ simplifies for samples from half-Bernoulli distributions. 

Recall that our bound is a quantile of the function $m(\z,\U)$ with respect to the uniform random variable $\U$.  For samples of the form $\z_{j,\N}$, this function reduces to a particularly simple form:
\begin{eqnarray}
m(\z_{j,\N},\U) &=& 1-\sum_{i=1}^\N U_i((\z_{j,\N})_{i+1}-(\z_{j,\N})_i)\\
&=&1-[0,...,0,1-k,0,...0]\cdot \U\\
\label{eq:simple}
&=& 1-(1-k) U_j.
\end{eqnarray}
That is, with the exception of the $j^\text{th}$ term, all of the successive differences of $\z_{j,\N}$ are 0,\footnote{We can ignore the case where $j=\N$, since the bound is trivial for $\z_{\N,\N}=(1,1,...,1)$.} leaving us with a simple function of the $j^\text{th}$ order statistic, $U_j$. Later it will be useful to note that the $j^\text{th}$ order statistic when taking $\N$ samples from a uniform distribution is beta distributed with parameters $j$ and $\N-j+1$ \citep[Example 5.4.5]{casella2002statistical}.

\subsubsection{Choosing $p_k$ to maximize the failure rate}
\label{subsubsec:choosingpk}
For a fixed $(\N,\mu,1-\alpha)$, consider the set of distributions, $H_{k,\mu}$, with $j_\text{min}=j$, for some value $j$. 
We are interested in the $H_{k,\mu}$ that maximizes $\Pr(\Z \in\nz)$. 
Since we are only considering half-Bernoulli distributions with a particular mean, $\mu$, the entire distribution is specified if $p_k$ is specified, and so we solve for the $p_k$ that maximizes $\Pr(\Z \in\nz)$:
\begin{align}
\underset{p_k: j_\text{min} = j}{\arg \max} \;\Pr_{H_{k,\mu}}(\Z \in \nz)
\overset{\text{(a)}}{=}& \underset{p_k: j_\text{min}=j}{\arg \max}\; \beta_\text{cdf}(p_k; j,\N-j+1)\\
\overset{\text{(b)}}{=}& \underset{p_k: j_\text{min}=j}{\arg \max} \; p_k.
\end{align}
Step (a) follows from \eqref{eq:ProbAsBeta}. Step (b) follows since all beta CDFs are monotonic in their first argument. 
In other words, within the set of $H_{k,\mu}$ that have the same $\mu$ and  $j_\text{min}$, the failure rate of the bound (the probability that $\Z\in\nz$) is monotonic in $p_k$.

Although the failure rate is monotonic in $p_k$, this does not mean that the worst-case is when $p_k=1$, since this monotonicity result is restricted to the set of half-Bernoulli distributions with $j_\text{min}=j$. 
We therefore now solve for the maximum $p_k$ such that $j_\text{min}=j$ in order to obtain the half-Bernoulli distribution with mean $\mu$ that maximizes the failure rate:
\begin{eqnarray}
&& \max \Big \{p_k \in [0,1] :  j_\text{min}=j\Big \}\\
&=& \max \Big \{p_k \in [0,1] :  \left \{\z_{1,\N},\z_{2,\N},...,\z_{j-1,\N}\right \} \subseteq \mz,\\
&&\hspace{1.05in} \left \{\z_{j,\N},\z_{j+1,\N},...,\z_{\N,\N}\right \} \subseteq \nz \Big\} \\
&\overset{(a)}{=}&\max \left \{p_k\in [0,1]: \z_{j,\N}\in \nz \right \}\\
&=& \max \left \{p_k\in [0,1]: \Pr_U\big(m(\z_{j,\N},U)< \mu\big)\geq 1-\alpha\right \}\\
&\overset{(b)}{=}& \max \left \{p_k\in [0,1]: \Pr_U\big(1-(1-k)U_j< \mu\big)\geq 1-\alpha \right\}\\
&=& \max \left \{p_k\in [0,1]: \Pr_U\left(U_j > \frac{1-\mu}{1-k}\right)\geq 1-\alpha \right \}\\
&=& \max \left \{p_k\in [0,1]: \Pr_U\left(U_j > p_k\right)\geq 1-\alpha\right \}\\
&=& \max \left \{p_k\in [0,1]: 1-\Pr_U\left(U_j \leq p_k\right)\geq 1-\alpha \right \}\\
&\overset{(c)}{=}& \max \left \{p_k\in [0,1]: 1-\beta_\text{cdf}\left(p_k; j, \N-j+1\right)\geq 1-\alpha \right \}\\
&=& \max \left \{p_k\in [0,1]: \beta_\text{cdf}\left(p_k; j, \N-j+1\right)\leq \alpha \right\}\\
&=& \max \left \{p_k\in [0,1]: \beta_\text{cdf}^{-1}(\alpha; j,\N-j+1) \geq p_k \right \}\\
&=& \beta_\text{cdf}^{-1}\left (\alpha; j,\N-j+1\right ).
\end{eqnarray}
Step (a) follows from the monotonicity of the bound and Section~\ref{sec:mono}. Step (b) uses the result of \eqref{eq:simple}. Step (c) uses the the fact that the $j^\text{th}$ order statistic of a uniform sample of size $n$ is beta distributed
with parameters $j$ and $n-j+1$.

\subsubsection{Bringing the pieces together}

We have established that, for a given $\N$, of all half-Bernoulli distributions with mean $\mu$ and $j_\text{min}=j$, the one that maximizes the failure rate of the bound has $p_k=\beta_\text{cdf}^{-1}\left (\alpha; j,\N-j+1\right )$. 
Plugging this into \eqref{eq:ProbAsBeta}, we have that 
\begin{align}
\underset{H_{k,\mu}:j_\text{min}=j}{\max} \; \Pr_{H_{k,\mu}}(\Z \in \nz) 
=& \beta_\text{cdf}(\beta_\text{cdf}^{-1}(\alpha; j,\N-j+1); j,\N-j+1)\\
=& \alpha.
\end{align}

Thus, we have seen that by maximizing the probability that $\Z$ causes $m_\alpha(\Z) > \mu$, i.e., maximizing $\Pr(\Z \in \nz)$, we can produce a probability of violation of at most $\alpha$. 
Thus, the bound holds with probability at least $1-\alpha$. 
Since this is true for all half-Bernoulli distributions for all values of $j_\text{min}$ and arbitrary tuples $(\N, \mu, 1-\alpha)$, it is true for all half-Bernoullis, all sample sizes and all confidence levels.

\section{Computing $m_\alpha$}
In this section, we discuss two methods for computing our bound, $m_\alpha(\z)$, for a particular sample $\z$. The first is based upon a geometric analysis of the bound and the second uses a Monte Carlo sampling technique.

\subsection{Geometric Computation of the Bound}
Recall that the random variable $\U$ represents the order statistics of a uniform sample, and hence lies in the order statistic simplex defined in Section~\ref{sec:orderStatSimplex}. Figure~\ref{fig:simplexVol} shows the order statistic simplex for $n=2$. Note that the order statistic simplex of dimension $n$ has volume $\frac{1}{n!}$.

Let $\mathbf{s}\definedas [z_2-z_1,z_3-z_2,...,z_{n+1}-z_n]$ be the {\em spacings} of the sample. 
Consider an example in which $\z=[0.3, 0.8]$. Then $\mathbf{s}=[0.5, 0.2]$, as shown in the figure. 

\begin{figure}
 \begin{center}
   \includegraphics[trim=0 100 200 210,clip,width=4.in]{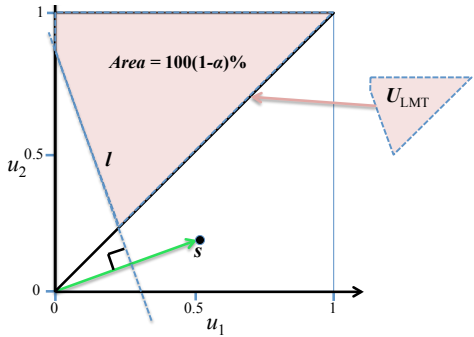}
    \caption{\label{fig:simplexVol}  The figure shows several quantities related to the geometric computation of our bound. The upper left triangle represents the order statistic simplex. The point $\mathbf{s}$ represents the spacings of the sample $\z$. The pink region, which we define later to be $\mathcal U_\text{LMT}$, is a section of the order statistic simplex cut by the hyperplane $l$, which represents the set of vectors $\uu$ for which $\uu \cdot \mathbf{s}$ is greater than or equal to some value. The goal is to find the maximum such value, and thereby the minimum $\hat{\mu}$ such that the volume of the pink region is $100(1-\alpha)\%$ of the volume of the order statistic simplex. 
    }
\end{center}
\end{figure}Starting from the definition of our bound $m_\alpha(\z)$ and expanding the definition of the quantile function, we have
\begin{align}
m_\alpha(\z)
=&\inf\left \{\hat{\mu}\in \mathbb R : \Pr(m(\z,\U) \leq \hat{\mu}) \geq 1-\alpha\right \}\\
=&\inf\left \{\hat{\mu} \in \mathbb R: n! \operatorname{Volume}(\{\uu: m(\z,\uu) \leq \hat{\mu}\}) \geq 1-\alpha\right \}\\
=&\inf\left \{\hat{\mu}\in \mathbb R : n! \operatorname{Volume}(\{\uu: 1-\uu\cdot \mathbf{s} \leq \hat{\mu}\}) \geq 1-\alpha\right \}\\
=&\inf\left \{\hat{\mu}\in \mathbb R : n! \operatorname{Volume}(\{\uu: \uu\cdot \mathbf{s} \geq 1-\hat{\mu}\}) \geq 1-\alpha\right \}.
\end{align}
This final expression has a clear geometric interpretation. The set of points $\uu$ such that $\uu\cdot \mathbf{s}$ is greater than some value is the upper right region of the order statistic simplex, depicted by the pink region in Figure~\ref{fig:simplexVol}. The bound is defined to be the least value of $\hat{\mu}$ such that the volume of the pink region is a fraction $1-\alpha$ of the simplex volume. 

This value of $\hat{\mu}$ can be found by evaluating the volume of the section of the order statistic simplex above 
the hyperplane $l$, a hyperplane orthogonal to the spacings vector $\mathbf{s}$. Thus, we seek the smallest value of $\hat{\mu}$ such that the volume of this section 
is $100(1-\alpha)\%$ of the volume of the order statistic simplex. This value of $\hat{\mu}$ is our bound.

Closed-form expressions for sections of simplexes cut by hyperplanes have been published by several authors, including \citet{lasserre2015volume}. 
These expressions lead to efficient calculations of the bound in most cases. However, these formulas have singularities that cause problems for certain samples $\z$, such as samples with repeated values. Thus, we explore a more reliable Monte Carlo approach for computing our bound below.

\subsection{Monte Carlo Estimate of the Bound}
Since the bound is defined in terms of a quantile of a function that depends upon a uniform random variable, it is  simple to develop a Monte Carlo estimate.  This is provided in Algorithm~\ref{alg:theAlg}.

\RestyleAlgo{boxruled}
\LinesNumbered
\begin{algorithm}[H]
 \SetKwInOut{Input}{Input}
 \SetKwInOut{Output}{Output}
 \Input{A sample $\x$, confidence parameter $\alpha<1$, and Monte Carlo sampling parameter $l$.}
 \Output{An estimate of $m_\alpha(\x)$.}
 $n \gets \operatorname{length}(\x)$\;
 $\z \gets \operatorname{sort}(\x,\text{ascending})$\;
 Create array $\mathbf{ms}$ to hold $l$ floating point numbers, and initialize it to zero\;
 Create arrays $\uu$ and $\mathbf s$, each to hold $n$ floating point numbers\;
 \For{$i \gets 1$ \KwTo $n-1$}{
    $\mathbf s[i] = z[i+1]-z[i]$\;
 }
 $\mathbf s[n] \gets 1-z[n]$\;
 \For{$i\gets1$ \KwTo $l$}{
    \For{$j\gets 1$ \KwTo $n$}{
        $\uu[j] \sim \operatorname{Uniform}(0,1)$\;
    }
    $\operatorname{sort}(\uu,\text{ascending})$\;
    $\mathbf{ms}[i] \gets 1-\mathbf s\cdot \uu$\;
 }
 $\operatorname{sort}(\mathbf{ms},\text{ascending})$\;
 \Return $\mathbf{ms}[\lceil (1-\alpha)l\rceil]$\;
 \caption{Monte Carlo Estimation of $m_\alpha$\newline This pseudocode uses one-based indexing of arrays.}
 \label{alg:theAlg}
\end{algorithm}

Algorithm \ref{alg:theAlg} can be implemented more efficiently if $m_\alpha$ will be estimated multiple times for the same $\N$, since samples of $\U$ can be computed and sorted a single time. 
Also, notice that the number of Monte Carlo samples, $l$, does not scale poorly for distributions with rare values, since we are estimating the $(1-\alpha)$-quantile of $m(\z,\U)$, which is robust to outliers (e.g., $\alpha=0.5$ makes this the median, which is well known to be robust to outliers). 

In practice, we find that $l=10,\!000$ tends to provide a reasonable approximation of $m_\alpha(\x)$ for $\alpha=0.05$. 
Note that \eqref{eq:mainResult} may not hold when using this Monte Carlo estimate of $m_\alpha$ due to error in the estimate. 
In practice this can be remedied by increasing $l$, or by incorporating high-probability bounds on the error in the Monte Carlo estimate into the bound. 
Also, note that as $\alpha$ decreases, $l$ should be increased. 

\section{Related Work}
\label{sec:related_work}
In this section, we review other methods for computing high-confidence upper bounds on the mean of a random variable from samples (several of which we compare to in the subsequent numerical analysis section). 
Although some of these methods extend to more general settings (e.g., Hoeffding's inequality does not require identically distributed samples, and Anderson's inequality does not require a lower-bound on the random variable), here we consider only the standard setting that we have discussed in this paper, wherein the samples are i.i.d.~and the random variable always takes values in the interval $[0,1]$. 
We divide this section into two parts: prior methods that provide guaranteed coverage, and prior methods that do not provide guaranteed coverage. 
We present these prior methods as functions, $m_\alpha^\text{Hoeffding}$, $m_\alpha^\text{Maurer\&Pontil}$, etc., each of which provides an alternative to $m_\alpha$.

\subsection{Prior Methods with Guaranteed Coverage}

The methods presented in this subsection have guaranteed coverage in the setting that we have described---they satisfy \eqref{eq:mainResult} if used in place of $m_\alpha$. 

Using Hoeffding's inequality  \citep{Hoeffding1963} to construct a high-confidence upper-bound on $\mu$ is perhaps the best known, and simplest, prior method with guaranteed coverage:
\begin{equation}
    m_\alpha^\text{Hoeffding}(\x) \definedas \bar \x + \sqrt{\frac{\ln(1/\alpha)}{2\N}},
\end{equation}
where $\bar \x$ is the sample mean, i.e., $\bar \x \definedas \frac{1}{\N}\sum_{i=1}^\N x_i$. 
In cases where the variance of the random variable is significantly less than one, the upper bounds provided by Maurer and Pontil's empirical Bernstein bound \citep{Maurer2009} can be tighter than those produced by Hoeffding's inequality. 
This is achieved by leveraging not just the sample mean, $\bar \x$, but also the sample variance, $\widehat{\operatorname{Var}}(\x)\definedas \frac{1}{n-1}\sum_{i=1}^n (x_i-\bar \x)^2$: 
\begin{equation}
    m_\alpha^\text{Maurer\&Pontil}(\x)\definedas \bar \x +\sqrt{\frac{2 \widehat{\operatorname{Var}}(\x) \ln(2/\alpha)}{\N}} + \frac{7 \ln(2/\alpha)}{3(\N-1)}.
\end{equation}
Going one step further, Anderson's inequality provides high-confidence upper bounds on the mean by using the entire sample CDF (rather than only the sample mean and variance): 
\begin{equation}
    m_\alpha^\text{Anderson}(\z)\definedas
    m(\z, \uu^\text{DKW}),
\end{equation}
where for $i \in \{1,2,\dotsc,\N\}$, 
\begin{equation}
    \label{eq:udkw}
    \mathbf u^\text{DKW}_i\definedas \max \left \{0,i/n-\sqrt{\ln(1/\alpha)/2\N} \right \}
\end{equation} 
is a vector that Anderson derived from the Dvoretsky-Kiefer-Wolfowitz (DKW) inequality \citep{Dvoretzky1956b}. 
Note that the form we present for Anderson's inequality uses the tight constant for the DKW inequality found by \citet{Massart90}, which relies on the assumption that $\alpha \leq 0.5$. 
This restriction is not restrictive because high-confidence bounds are typically applied with small values of $\alpha$, e.g., $\alpha=0.05$.

Following these three methods, several alternatives have been proposed, including other approaches that rely only on the sample mean \citep{chen2008confidence}, methods that extend Maurer and Pontil's empirical Bernstein bound to provide tighter bounds for random variables with long tails \citep{Bubeck2012,Thomas2015}, and methods that provide alternatives to Anderson's inequality that use alternative methods of defining inclusion envelopes for a distribution's CDF~\citep{Learned-Miller2008,Diouf2005}.

\subsection{Prior Methods without Guaranteed Coverage}

All of the methods presented in this subsection do \emph{not} have guaranteed coverage in the setting that we have described, but are often used to compute high-confidence upper bounds on the mean.

Perhaps the most common method for constructing high-confidence upper bounds is based on Student's $t$-statistic \citep{student1908probable}:
\begin{equation}
    m_\alpha^\text{Student}(\x) \definedas \bar \x + \sqrt{\frac{\widehat{\operatorname{Var}}(\x)}{\N}}t_{1-\alpha,\N-1},
\end{equation}
where $t_{1-\alpha,\nu}$ denotes the $100(1-\alpha)$ percentile of the Student's $t$ distribution with $\nu$ degrees of freedom. 
We refer to this confidence interval as the Student-$t$ interval. 
If $\bar \X$ is normally distributed, then $m_\alpha^\text{Student}$ \emph{does} provide guaranteed coverage. 
The central limit theorem implies that $\bar \X$ tends towards a normal distribution as $\N$ increases, and so this method is often applied in scientific research if $\N \geq 30$, even though this does not provide  coverage guarantees.

Bootstrap methods tend to provide the tightest confidence intervals for the mean. 
However, this comes at a large cost: they do not have guaranteed coverage, even with normality assumptions. 
Despite concerns about their reliability, bootstrap methods remain in common use due to their tight confidence intervals and tendency to produce error rates roughly around $\alpha$ for many common distributions~\citep{hanna2017bootstrapping,Thomas2015c}. 
Since bootstrap methods are not easily expressed as closed-form alternatives to $m_\alpha$, we refer the reader to the work of \citet{Efron1993} for details on these approaches. 
The two that we focus on in our subsequent experiments are the most common, the percentile bootstrap, and one of the most sophisticated, the \emph{bias corrected and accelerated} (BCa) bootstrap.

One limitation of BCa, the more sophisticated bootstrap method, is that it is not defined in some cases (e.g., if all of the samples take the same value) and can encounter numerical issues in other cases. 
In our implementations, whenever numerical issues are detected, the method automatically reverts to the percentile bootstrap.

\section{Theoretical Analysis}

In this section, we provide an analytic comparison of our bound to two prior methods that provide guaranteed coverage: Hoeffding's inequality and Anderson's inequality. 
We will show that, for any sample $\z$, our high-confidence upper bound is less than that resulting from Hoeffding's inequality, and never greater than that of Anderson's inequality. 
We break this result into two components. 
First we show that for all $\z$, $m_\alpha(\z) \leq m_\alpha^\text{Anderson}(\z)$. 
Second, we show that for all $\z$, $m_\alpha^\text{Anderson}(\z) \leq m_\alpha^\text{Hoeffding}(\z)$, which implies that $m_\alpha(\z) \leq m_\alpha^\text{Hoeffding}(\z)$, where these inequalities are strict if $\alpha \leq 0.5$. 

\subsection{Theoretical Comparison to Anderson's Inequality}

In this section we compare $m_\alpha$ to $m_\alpha^\text{Anderson}$. 
\begin{theorem}
    \label{thm:LMTVsAnderson}
    For all possible values $\z$ of $\Z$ and all $\alpha \in [0,0.5]$, 
    \begin{equation}
        m_\alpha(\z) \leq m_\alpha^\text{Anderson}(\z).
    \end{equation}
\end{theorem}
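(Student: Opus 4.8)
The plan is to exploit the fact that $m_\alpha(\z)$ is a \emph{quantile} of $m(\z,\U)$, whereas $m_\alpha^\text{Anderson}(\z) = m(\z,\uu^\text{DKW})$ is a single \emph{realization} of the very same function $m(\z,\cdot)$. By the infimum characterization of the quantile in \eqref{eq:quantileDefn}, any value $v$ satisfying $\Pr(m(\z,\U) \leq v) \geq 1-\alpha$ lies in the set whose infimum is $m_\alpha(\z)$, and hence $m_\alpha(\z) \leq v$. So, treating $\z$ as fixed and $\U$ as the random uniform order statistics, it suffices to establish
\[
\Pr\left(m(\z,\U) \leq m(\z,\uu^\text{DKW})\right) \geq 1-\alpha .
\]

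Next I would use the linear structure of $m$. Writing $\s = (z_2-z_1,\dotsc,z_{\N+1}-z_\N)$ for the spacings (with $z_{\N+1} = 1$), \eqref{eq:UdotS} gives $m(\z,\uu) = 1 - \uu\cdot\s$, and crucially $\s \geq 0$ componentwise because $\z$ is an order-statistic vector. Thus $m(\z,\cdot)$ is nonincreasing in each coordinate, so the componentwise event $\{U_i \geq \uu^\text{DKW}_i \text{ for all } i\}$ is a \emph{sufficient} condition for $m(\z,\U) \leq m(\z,\uu^\text{DKW})$. It therefore suffices to prove
\[
\Pr\left(U_i \geq \uu^\text{DKW}_i \text{ for all } i\right) \geq 1-\alpha .
\]

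The final step is to recognize this componentwise event as a one-sided DKW event for the uniform order statistics \emph{themselves}. Since $\uu^\text{DKW}_i = \max\{0, i/\N - \epsilon\}$ with $\epsilon \definedas \sqrt{\ln(1/\alpha)/(2\N)}$ (see \eqref{eq:udkw}), the constraint is vacuously satisfied whenever $i/\N - \epsilon \leq 0$, since $U_i \geq 0$; otherwise it reads $U_i \geq i/\N - \epsilon$. Hence the event equals $\{\max_i (i/\N - U_i) \leq \epsilon\}$. Because $\U$ are the order statistics of $\N$ uniform samples, $i/\N$ is the value of their empirical CDF $\hat G_\N$ at $U_i$, while $U_i$ is the value of the uniform CDF at $U_i$; the supremum $\sup_u(\hat G_\N(u) - u)$ is attained just after a jump and equals $\max_i(i/\N - U_i)$. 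Applying the one-sided DKW inequality with Massart's tight constant then gives
\[
\Pr\left(\sup_u(\hat G_\N(u) - u) > \epsilon\right) \leq e^{-2\N\epsilon^2} = \alpha ,
\]
which is exactly the bound $\Pr(\max_i(i/\N - U_i) \leq \epsilon) \geq 1-\alpha$ needed above.

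The main obstacle is conceptual rather than computational: it is the observation in the last step that the randomness \emph{defining} $m_\alpha$ is itself a uniform sample, so the same DKW inequality that Anderson invokes for the data can be applied directly to $\U$. The points requiring care are matching the truncation $\max\{0,\cdot\}$ to the coordinates where the constraint is trivially met, and using the \emph{one-sided} DKW bound (with Massart's constant, which is where the hypothesis $\alpha \leq 0.5$ enters) rather than the two-sided version, so as not to lose a factor of two in the exponent. Once this identification is made, the rest is the monotonicity-and-quantile bookkeeping of the first two paragraphs.
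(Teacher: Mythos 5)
Your proof is correct and follows essentially the same route as the paper's own (sketched) argument: both rest on the observations that Anderson's bound $m(\z,\uu^\text{DKW})$ dominates $m(\z,\cdot)$ on the element-wise region $\{\uu : u_i \geq \uu^\text{DKW}_i \text{ for all } i\}$ because $m(\z,\cdot)$ is coordinate-wise nonincreasing, that this region has probability at least $1-\alpha$ under the law of $\U$ by Massart's one-sided DKW inequality applied to the uniform order statistics themselves, and that the quantile definition then forces $m_\alpha(\z) \leq m(\z,\uu^\text{DKW})$. Your write-up is in fact more complete than the paper's proof, which is only a sketch built around a figure for $\N=2$: you make the quantile bookkeeping explicit and correctly pinpoint that the hypothesis $\alpha \leq 0.5$ enters through the validity condition of Massart's one-sided constant.
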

\begin{proof}
We present a sketch of the proof. 
Consider the diagram in  Figure \ref{fig:dominance}. 
This figure depicts, for $\N=2$, the space of possible vectors $\uu$, which are sorted uniform samples. 
    The point $(u_1,u_2)$ represents $\uu^\text{DKW}$, defined in \eqref{eq:udkw}. 
    $\mathcal U_\text{DKW}$ denotes the set of vectors that are element-wise greater than $\uu^\text{DKW}$. 
    It follows from the DKW inequality, with the tight constants found by \citet{Massart90}, that the probability $\U$ is in $\mathcal U_\text{DKW}$ is at least $1-\alpha$. 
    The region $\mathcal U_\text{LMT}$ is any set of $\uu$'s that result in the lowest induced means, $m(\z,\uu)$, while ensuring that the probability that $\U$ is in $\mathcal U_\text{LMT}$ is precisely $1-\alpha$. 
    Note that any point that is not contained within the pink region must represent a vector $\uu$ that results in an induced mean, $m(\z,\uu)$, which is greater than the induced mean of any point in the pink region.  
    Our bound is effectively the maximum over induced means of points in the pink region and Anderson's bound is the maximum over points in the blue region. 
    Since the probability that $\U$ is in $\mathcal U_\text{DKW}$ cannot be less than the probability that $\U$ is in $\mathcal U_\text{LMT}$, and $\mathcal U_\text{LMT}$ contains the $\uu$ vectors that minimize the induced mean, our bound cannot be larger than Anderson's. 
\end{proof}

\begin{figure}[htbp]
 \begin{center}
   \includegraphics[trim=0 100 200 200,clip,width=4.in]{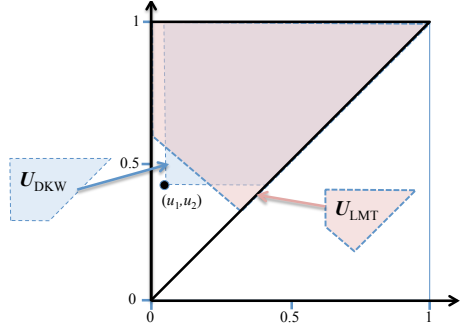}
    \caption{\label{fig:dominance}  Diagram comparing our bound and Anderson's.
    }
\end{center}
\end{figure}

\subsection{Analytic Comparison to Hoeffding's Inequality}

In this section we prove the following theorem:
\begin{theorem}
    \label{thm:AndersonVsHoeffding}
    For all possible values $\z$ of $\Z$ and all $\alpha \in [0,1]$, 
    \begin{equation}
        m_\alpha^\text{Anderson}(\z) \leq m_\alpha^\text{Hoeffding}(\z),
    \end{equation}
    where the inequality is strict if $\alpha \leq 0.5$.
\end{theorem}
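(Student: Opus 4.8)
The plan is to write both bounds in terms of the induced-mean functional $m(\z,\cdot)$ and compare them coefficientwise. The first thing I would establish is that the sample mean is itself an induced mean: if $\uu^{\text{emp}}$ denotes the empirical-CDF vector with $\uu^{\text{emp}}_i=i/\N$, then $m(\z,\uu^{\text{emp}})=\bar\z$. This follows from the ``horizontal strips'' formula $m(\z,\uu)=\sum_{i=1}^{\N+1}z_i(u_i-u_{i-1})$ by a one-line Abel summation (the empirical CDF places mass $1/\N$ at each $z_i$ and no mass at $1$), or equivalently from the telescoping identity $1-\sum_{i=1}^{\N}(i/\N)(z_{i+1}-z_i)=\bar\z$. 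Writing $\epsilon \definedas \sqrt{\ln(1/\alpha)/(2\N)}$, Hoeffding's bound becomes $m_\alpha^{\text{Hoeffding}}(\z)=m(\z,\uu^{\text{emp}})+\epsilon=\bar\z+\epsilon$, while Anderson's bound is $m(\z,\uu^{\text{DKW}})$ with $u^{\text{DKW}}_i=\max\{0,\,i/\N-\epsilon\}$.

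The heart of the argument is a direct computation of the gap. Using the ``vertical strips'' formula $m(\z,\uu)=1-\sum_{i=1}^{\N}u_i(z_{i+1}-z_i)$ (with $z_{\N+1}\definedas1$), the two induced means differ only through their $\uu$ vectors, so
\begin{equation}
m(\z,\uu^{\text{DKW}})-\bar\z=\sum_{i=1}^{\N}\left(\frac{i}{\N}-u^{\text{DKW}}_i\right)(z_{i+1}-z_i)=\sum_{i=1}^{\N}\min\left\{\frac{i}{\N},\epsilon\right\}(z_{i+1}-z_i).
\end{equation}
Each spacing $z_{i+1}-z_i$ is nonnegative, each coefficient $\min\{i/\N,\epsilon\}$ is at most $\epsilon$, and the spacings telescope to $\sum_{i=1}^{\N}(z_{i+1}-z_i)=1-z_1\le1$, so the right-hand side is at most $\epsilon(1-z_1)\le\epsilon$. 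Hence $m_\alpha^{\text{Anderson}}(\z)=m(\z,\uu^{\text{DKW}})\le\bar\z+\epsilon=m_\alpha^{\text{Hoeffding}}(\z)$, which is the claimed inequality and uses nothing beyond $z_1\ge0$ and $\alpha\in[0,1]$.

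For strictness I would rewrite the full gap as a sum of two manifestly nonnegative pieces, obtained by splitting $\epsilon=\epsilon z_1+\epsilon\sum_{i=1}^{\N}(z_{i+1}-z_i)$:
\begin{equation}
m_\alpha^{\text{Hoeffding}}(\z)-m_\alpha^{\text{Anderson}}(\z)=\epsilon\,z_1+\sum_{i=1}^{\N}\left(\epsilon-\min\left\{\frac{i}{\N},\epsilon\right\}\right)(z_{i+1}-z_i).
\end{equation}
The gap is strictly positive as soon as either $z_1>0$, or some positive spacing $z_{i+1}-z_i$ occurs at an index with $i/\N<\epsilon$; the assumption $\alpha\le0.5$ keeps $\epsilon$ bounded away from $0$ (it is also the regime in which Anderson's bound with Massart's tight DKW constant is valid), which is what makes the small-index coefficients positive and drives strictness for nondegenerate samples.

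The step I expect to need the most care is this strictness claim, because it is \emph{not} literally true for every $\z$. For the degenerate sample $\z=(0,\dots,0)$ with $\epsilon<1$ one has $u^{\text{DKW}}_\N=1-\epsilon$ and $m(\z,\uu^{\text{DKW}})=1-u^{\text{DKW}}_\N=\epsilon=\bar\z+\epsilon$, an exact tie. So the honest form of the strictness statement must exclude this degenerate family (mass concentrated at the lower endpoint with $\epsilon\le1$); I would either add the hypothesis $z_1>0$, or phrase strictness as holding ``for all nondegenerate $\z$.'' Everything else---the Abel-summation identity and the coefficientwise bound---is routine once the two bounds are placed on the common $m(\z,\cdot)$ footing.
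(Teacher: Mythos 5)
Your derivation of the weak inequality is correct, and at its core it is the same calculation as the paper's: the paper expands $m(\z,\uu^\text{DKW})$ via the vertical-strips formula, replaces $\max\{0,\,i/\N-\epsilon\}$ by $i/\N-\epsilon$ (where $\epsilon\definedas\sqrt{\ln(1/\alpha)/(2\N)}$), and telescopes to $\bar\z+(1-z_1)\epsilon\le\bar\z+\epsilon$. Your version packages the same telescoping as the identity $m(\z,\uu^{\text{emp}})=\bar\z$ and, instead of discarding the max, uses the exact identity $i/\N-\max\{0,\,i/\N-\epsilon\}=\min\{i/\N,\epsilon\}$. That is a refinement rather than a different route, but it buys something real: your expression for the gap, $\epsilon z_1+\sum_{i=1}^\N\bigl(\epsilon-\min\{i/\N,\epsilon\}\bigr)(z_{i+1}-z_i)$, is an exact equality, so it characterizes precisely when the two bounds coincide.

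That characterization matters because you are right about the strictness clause: as stated, it is false, and your counterexample is valid. For $\z=(0,\dotsc,0)$ with $\epsilon\le 1$ (e.g.\ $\alpha=0.05$, $\N=2$, so $\epsilon\approx 0.865$), one has $m_\alpha^\text{Anderson}(\z)=1-\max\{0,1-\epsilon\}=\epsilon=\bar\z+\epsilon=m_\alpha^\text{Hoeffding}(\z)$, an exact tie despite $\alpha\le 0.5$. The paper's proof of strictness commits exactly the error you anticipated: it argues that the $i=1$ coefficient satisfies $\max\{0,\,1/\N-\epsilon\}>1/\N-\epsilon$ when $\alpha\le 0.5$ and concludes that the summed inequality is strict, overlooking that this coefficient enters the sum multiplied by the spacing $z_2-z_1$, which vanishes whenever $z_1=z_2$ (as in your counterexample). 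A secondary slip in the paper is the claim that $1/\N-\epsilon<0$ ``for any $\N$'' when $\alpha\le 0.5$: at $\alpha=0.5$ this fails for $\N\in\{1,2\}$, since e.g.\ $1/2>\sqrt{\ln(2)/4}\approx 0.416$. Your proposed repair---requiring $z_1>0$, or more precisely requiring $\epsilon>0$ together with either $z_1>0$ or some positive spacing $z_{i+1}-z_i$ at an index with $i/\N<\epsilon$---is the correct form of the statement, and it follows immediately from your exact gap formula.
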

\begin{proof}
We begin with $m_\alpha^\text{Anderson}(\z)$ and present a sequence of inequalities that conclude with $m_\alpha^\text{Hoeffding}(\z)$, where one inequality is strict if $\alpha \leq 0.5$: 
\begin{align}
    m_\alpha^\text{Anderson}(\z) =& m\left (\z,\uu^\text{DKW}\right )\\
    =& 1-\sum_{i=1}^\N (z_{i+1}-z_i) \uu_i^\text{DKW}\\
    =& 1-\sum_{i=1}^\N (z_{i+1}-z_i) \max\left \{ 0, \frac{i}{\N} - \sqrt{\frac{\ln(1/\alpha)}{2\N}} \right \}\\
    \leq &1-\sum_{i=1}^\N (z_{i+1}-z_i) \left (\frac{i}{\N} - \sqrt{\frac{\ln(1/\alpha)}{2n}}\right ). 
\end{align}
If $\alpha \leq 0.5$, then this final inequality is strict because, when $i=1$, we have that for any $n$,  $0 > i/n-\sqrt{\ln(1/\alpha)/2n}$, and so \begin{equation}
    \max\left \{0,\frac{i}{n} - \sqrt{\frac{\ln(1/\alpha)}{2n}}\right \} > \frac{i}{n}-\sqrt{\frac{\ln(1/\alpha)}{2n}}.
\end{equation}
Continuing, we have:
\begin{align}
    m_\alpha^\text{Anderson}(\z)
    \leq&1-\sum_{i=1}^\N (z_{i+1}-z_i)\frac{i}{n} + \sum_{i=1}^n (z_{i+1}-z_i) \sqrt{\frac{\ln(1/\alpha)}{2n}}\\
    =& 1+\frac{1}{n}\left (\sum_{i=1}^n i z_i - \sum_{i=1}^n i z_{i+1}\right ) + (z_{n+1}-z_1)\sqrt{\frac{\ln(1/\alpha)}{2n}}\\
    =& 1+\frac{1}{n}\left (\sum_{i=1}^n i z_i - \left (\sum_{i=2}^{n} (i-1) z_i \right )- n\right ) + (1-z_1)\sqrt{\frac{\ln(1/\alpha)}{2n}}\\
    \label{eq:lkjasdfasdf}=& \frac{1}{n} \sum_{i=1}^n z_i + (1-z_1)\sqrt{\frac{\ln(1/\alpha)}{2n}}\\
    \leq& \frac{1}{n} \sum_{i=1}^n z_i + \sqrt{\frac{\ln(1/\alpha)}{2n}}\\
    =& m_\alpha^\text{Hoeffding}(\z).
\end{align}
Notice that \eqref{eq:lkjasdfasdf} provides an expression similar to Hoeffding's inequality, but where the lower bound on the random variable (in our case, zero) is replaced by the smallest observed sample, $z_1$. 
This presents a tighter variant of Hoeffding's inequality that holds when $\alpha \leq 0.5$ and the random variables are i.i.d.~(the general form of Hoeffding's inequality holds for random variables that are not necessarily identically distributed). 
\end{proof}

It then follows from Theorem \ref{thm:AndersonVsHoeffding} that our bound is always at least as tight as Hoeffding's inequality, and is strictly tighter if $\alpha \leq 0.5$:
\begin{corollary}
    For all possible values $\z$ of $\Z$ and all $\alpha \in [0,1]$, 
    \begin{equation}
        m_\alpha(\z) \leq m_\alpha^\text{Hoeffding}(\z),
    \end{equation}
    where the inequality is strict if $\alpha \leq 0.5$.
\end{corollary}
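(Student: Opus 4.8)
The plan is to obtain the corollary by transitivity, composing the two theorems just proved. For $\alpha \in [0,0.5]$ this is immediate: Theorem~\ref{thm:LMTVsAnderson} gives $m_\alpha(\z) \leq m_\alpha^\text{Anderson}(\z)$ on exactly this range, and Theorem~\ref{thm:AndersonVsHoeffding} gives $m_\alpha^\text{Anderson}(\z) \leq m_\alpha^\text{Hoeffding}(\z)$ with the latter inequality strict whenever $\alpha \leq 0.5$. Chaining the two yields $m_\alpha(\z) \leq m_\alpha^\text{Anderson}(\z) \leq m_\alpha^\text{Hoeffding}(\z)$, and since the Anderson-to-Hoeffding step is already strict on $[0,0.5]$, so is the composite inequality. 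This disposes of the entire strictness claim as well as the range in which the bounds are used in practice.

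The main obstacle is the remaining range $\alpha \in (0.5,1]$, where the intermediate step is unavailable: Theorem~\ref{thm:LMTVsAnderson} is stated only for $\alpha \leq 0.5$, and indeed $m_\alpha^\text{Anderson}$ with the Massart constant is valid only there, so I cannot route the argument through Anderson's inequality. I would instead prove $m_\alpha(\z) \leq m_\alpha^\text{Hoeffding}(\z)$ directly for $\alpha > 0.5$. Writing $\bar z \definedas \frac{1}{\N}\sum_{i=1}^\N z_i$, $c \definedas \sqrt{\ln(1/\alpha)/(2\N)}$, and recalling from \eqref{eq:UdotS} that $m(\z,\uu) = 1 - \uu \cdot \s$ with spacings $\s$ satisfying $\sum_i s_i = 1-z_1 \leq 1$, the infimum definition of the $(1-\alpha)$-quantile shows that it suffices to establish the scalar tail bound
\begin{equation}
\Pr\left( \U \cdot \s < 1 - \bar z - c \right) \leq \alpha ,
\end{equation}
where $\U$ is the vector of uniform order statistics; this immediately gives $m_\alpha(\z) \leq \bar z + c = m_\alpha^\text{Hoeffding}(\z)$.

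To control this tail I would exploit that $\U \cdot \s$ is a nonnegatively weighted sum of the order statistics $U_1 \leq \cdots \leq U_\N$, with mean $\tfrac{\N}{\N+1}(1-\bar z)$ and with the $U_i$ negatively associated, so a Hoeffding-type concentration inequality for negatively associated bounded variables can bound its lower tail around the mean. I expect this to be where the real work lies: for $\alpha$ just above $0.5$ the threshold $1-\bar z - c$ sits only slightly below the mean, so the estimate must be sharp enough to keep the probability at or below $\alpha$, which is precisely the regime where the clean Massart argument breaks down. One tempting shortcut is to compare $\U \cdot \s$ against the un-truncated vector $\uu^* \definedas (i/\N - c)_{i=1}^\N$, for which the computation leading to \eqref{eq:lkjasdfasdf} already gives $m(\z,\uu^*) = \bar z + (1-z_1)c \leq \bar z + c$; however, bounding $\Pr(\U \cdot \s \geq \uu^* \cdot \s)$ through the uniform-band event $\{U_i \geq i/\N - c \ \forall i\}$ merely re-derives Anderson's inequality and inherits the same $\alpha \leq 0.5$ restriction. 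Because $\{\U \cdot \s \geq \uu^* \cdot \s\}$ is strictly larger than that band event, a bound on the \emph{single} linear functional $\U \cdot \s$—rather than on the sup-norm deviation of the whole empirical CDF—is what can plausibly survive into $\alpha > 0.5$, and obtaining such a bound is the crux of completing the proof on that range.
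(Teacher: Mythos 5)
Your core argument is exactly the paper's proof. The paper disposes of this corollary in a single line---it ``follows immediately from Theorems~\ref{thm:LMTVsAnderson} and \ref{thm:AndersonVsHoeffding}''---which is precisely your chaining $m_\alpha(\z) \leq m_\alpha^\text{Anderson}(\z) \leq m_\alpha^\text{Hoeffding}(\z)$, with strictness inherited from the second link when $\alpha \leq 0.5$. So on the range $\alpha \in [0,0.5]$ your write-up and the paper's proof coincide.

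Where you go beyond the paper, your diagnosis is correct and in fact exposes a genuine imprecision in the paper itself: the corollary is asserted for all $\alpha \in [0,1]$, but Theorem~\ref{thm:LMTVsAnderson} is stated only for $\alpha \in [0,0.5]$, and its proof via the one-sided DKW band with Massart's constant genuinely requires $\alpha \leq 0.5$ (the paper says as much when introducing $m_\alpha^\text{Anderson}$). The paper's ``immediate'' proof therefore has exactly the hole you describe for $\alpha \in (0.5,1]$, and the paper contains no substitute argument there. That said, judged as a proof of the statement as written, your proposal is also incomplete: your reduction of the missing case to the tail bound $\Pr\bigl(\U \cdot \s < 1 - \bar z - c\bigr) \leq \alpha$, where $\bar z \definedas \frac{1}{\N}\sum_{i=1}^{\N} z_i$ and $c \definedas \sqrt{\ln(1/\alpha)/(2\N)}$, is correct, as are your supporting computations ($\sum_i s_i = 1-z_1$ and $\mathbf{E}[\U\cdot\s] = \frac{\N}{\N+1}(1-\bar z)$), but the tail bound itself is left unproven, as you acknowledge. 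One further obstacle you should flag if you pursue this route: mean-centered concentration cannot work uniformly on $(0.5,1]$, because as $\alpha \to 1$ we have $c \to 0$, and whenever $c < \frac{1-\bar z}{\N+1}$ the threshold $1-\bar z - c$ lies \emph{above} the mean of $\U\cdot\s$, so no lower-tail deviation inequality about the mean applies; that end of the range instead requires showing the upper tail $\Pr(\U\cdot\s \geq 1-\bar z - c) \geq 1-\alpha$ directly, using that $\U\cdot\s$ ranges up to $1-z_1 > 1-\bar z - c$. In short: your chaining matches the paper, your critique of the claimed range is valid (and applies to the paper's own proof), but the case $\alpha \in (0.5,1]$ remains unproven in both.
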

\begin{proof}
This follows immediately from Theorems \ref{thm:LMTVsAnderson} and \ref{thm:AndersonVsHoeffding}.
\end{proof}

\section{Numerical Analysis}

In this section we present results from a numerical analysis of our bound. 
These empirical results aim to answer the following research questions:
\begin{enumerate}
    \item[RQ1] For a variety of distributions, confidence levels, and number of samples, are results consistent with~\eqref{eq:mainResult}?
    \item[RQ2] For a variety of distributions that resemble common use-cases, how do the confidence intervals produced by our bound compare to those of previous methods that have guaranteed coverage (i.e., those that satisfy \eqref{eq:mainResult})?
    \item[RQ3] This question is the same as RQ2, but for methods that do \textbf{not} have guaranteed coverage. 
    \item[RQ4] Can our bound provide confidence intervals that are practical for scientific experiments with fewer than $30$ samples?
\end{enumerate}

\subsection{Numerical Studies on Guaranteed Coverage}

In this subsection we study RQ1 with experiments that are consistent with the conjecture that $m_\alpha$, as we have defined it, has guaranteed coverage (satisfies \eqref{eq:mainResult}). 
Although they show that \eqref{eq:mainResult} appears to hold for a variety of settings, this does not imply that settings do not exist under which \eqref{eq:mainResult} does not hold.

To study RQ1, we selected a variety of different distributions (uniform, beta, and Bernoulli, each with various parameters), confidence levels $1-\alpha$, and number of samples, $\N$. 
For each such tuple, $(\text{distribution},1-\alpha,\N)$, we collected $10,\!000$ samples of $\Z$, computed $m_\alpha(\Z)$, and checked whether $m_\alpha(\Z) \geq \mu$. 
From these $10,\!000$ tests, we estimated the coverage---the probability that the bound holds. 
That is, we estimated $\Pr(m_\alpha(\Z) \geq \mu)$ by dividing the number of samples of $\Z$ such that $m_\alpha(\Z) \geq \mu$ by $10,\!000$.

Although or goal here is to study RQ1, to facilitate the interpretation of the presented results, we provide results using two prior methods that exhibit the different types of behavior that our bound might produce. 
This comparison also provides some insight into RQ2 (via the comparison to Heoffding's inequality) and RQ3 (via comparison to the Student-$t$ interval). 
However, note that subsequent experiments further study these two research questions.

First consider Figure \ref{fig:alphaPlots_Hoeffding}, which presents results using Hoeffding's inequality. 
The top left plot shows the coverage for a variety of beta distributions, but with $\N$ fixed. 
To interpret this plot, consider the curve $\beta(1,5)\,\,n=10$ at the $0.7$ position on the horizontal axis. 
The position $0.7$ on the horizontal axis indicates that we requested an upper bound that holds with probability at least $0.7$. 
Since, at horizontal position $0.7$, the curve $\beta(1,5)\,\,n=10$ (red curve) lies above $0.7$ (blue curve), the upper bound produced by Hoeffding's inequality held with probability greater than $0.7$. 
Hence, a curve remaining above the blue line indicates that the desired confidence level was achieved. 
However, notice that the curve is far above the blue line---this indicates that Hoeffding's inequality was overly conservative. 
It provided a high-confidence upper bound that was greater than or equal to the mean far more often than requested. 
This means that for this distribution the confidence interval provided by Hoeffding's inequality is not tight, and could be improved.

The three other plots in Figure \ref{fig:alphaPlots_Hoeffding} are similar, but use different parameters. 
The top row presents results for beta distributions, while the bottom row presents results for Bernoulli distributions. 
The left column presents results as the parameters of the distributions are varied, while the right column presents results as the number of samples is varied. 
Overall Figure \ref{fig:alphaPlots_Hoeffding} shows the behavior that we would expect of a bound that has guaranteed coverage, but which provides loose high-confidence upper bounds.

\begin{figure}[htbp]
    \centering
    \includegraphics[width=0.45\textwidth]{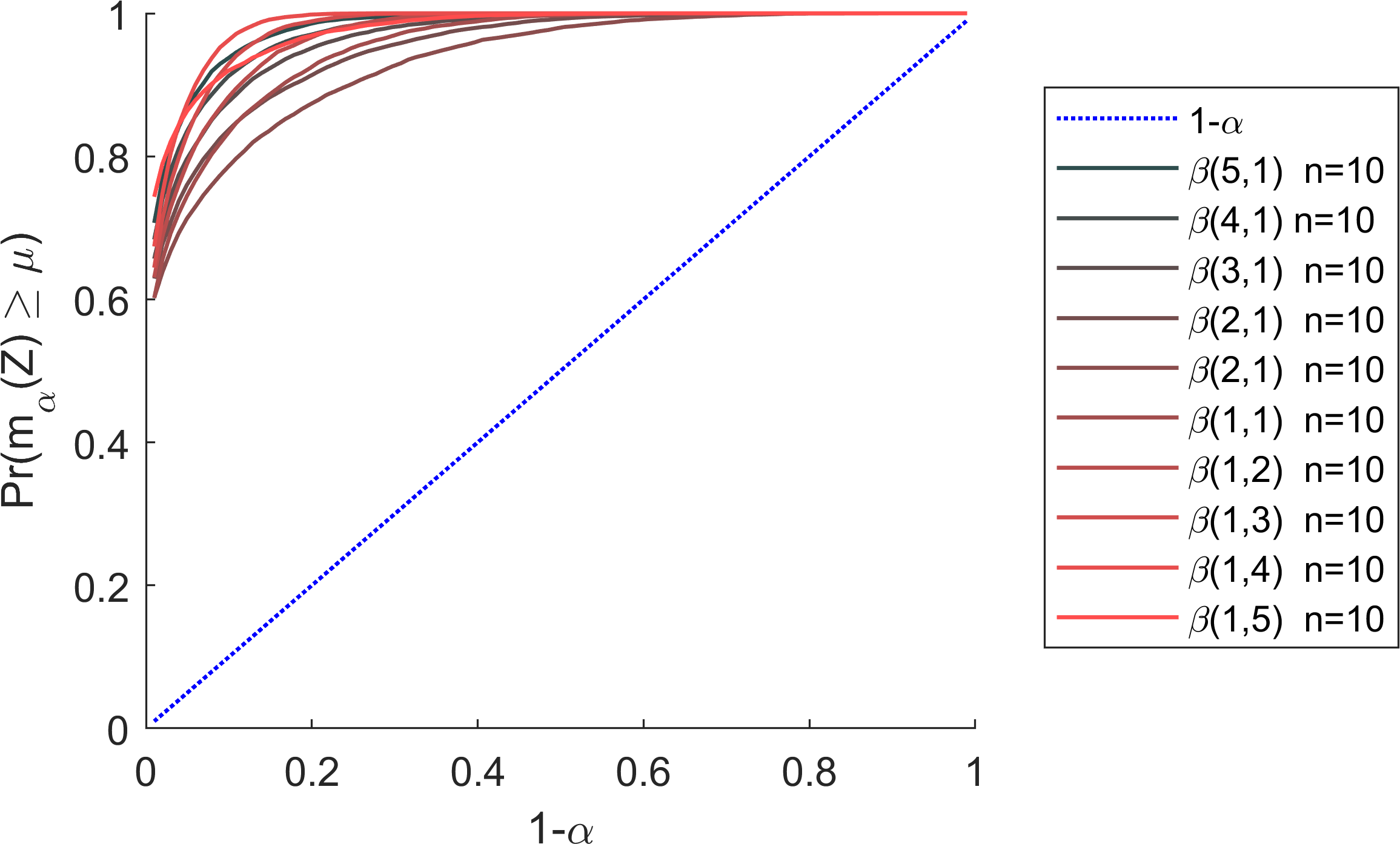}~
    \includegraphics[width=0.45\textwidth]{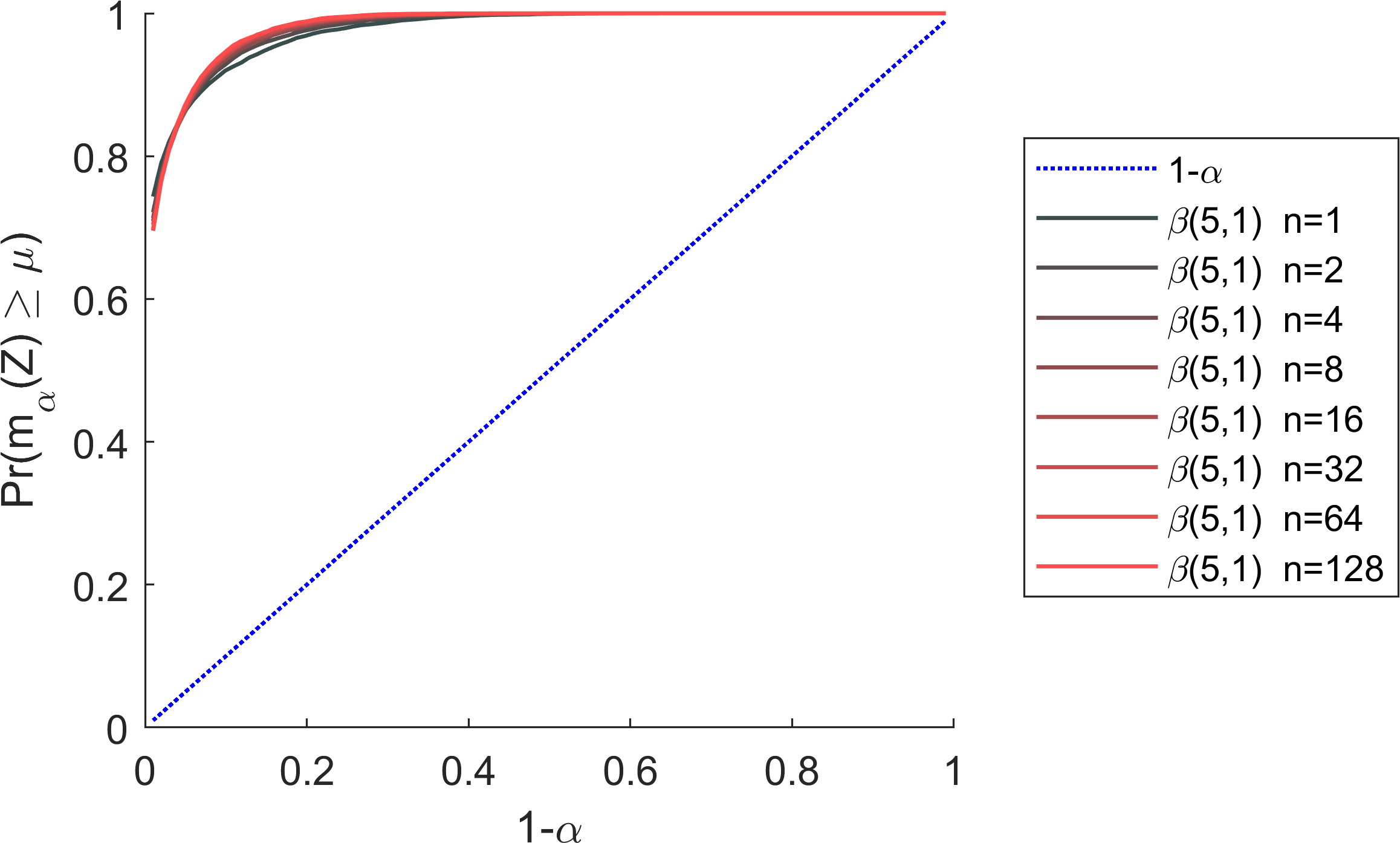}\\
    \includegraphics[width=0.45\textwidth]{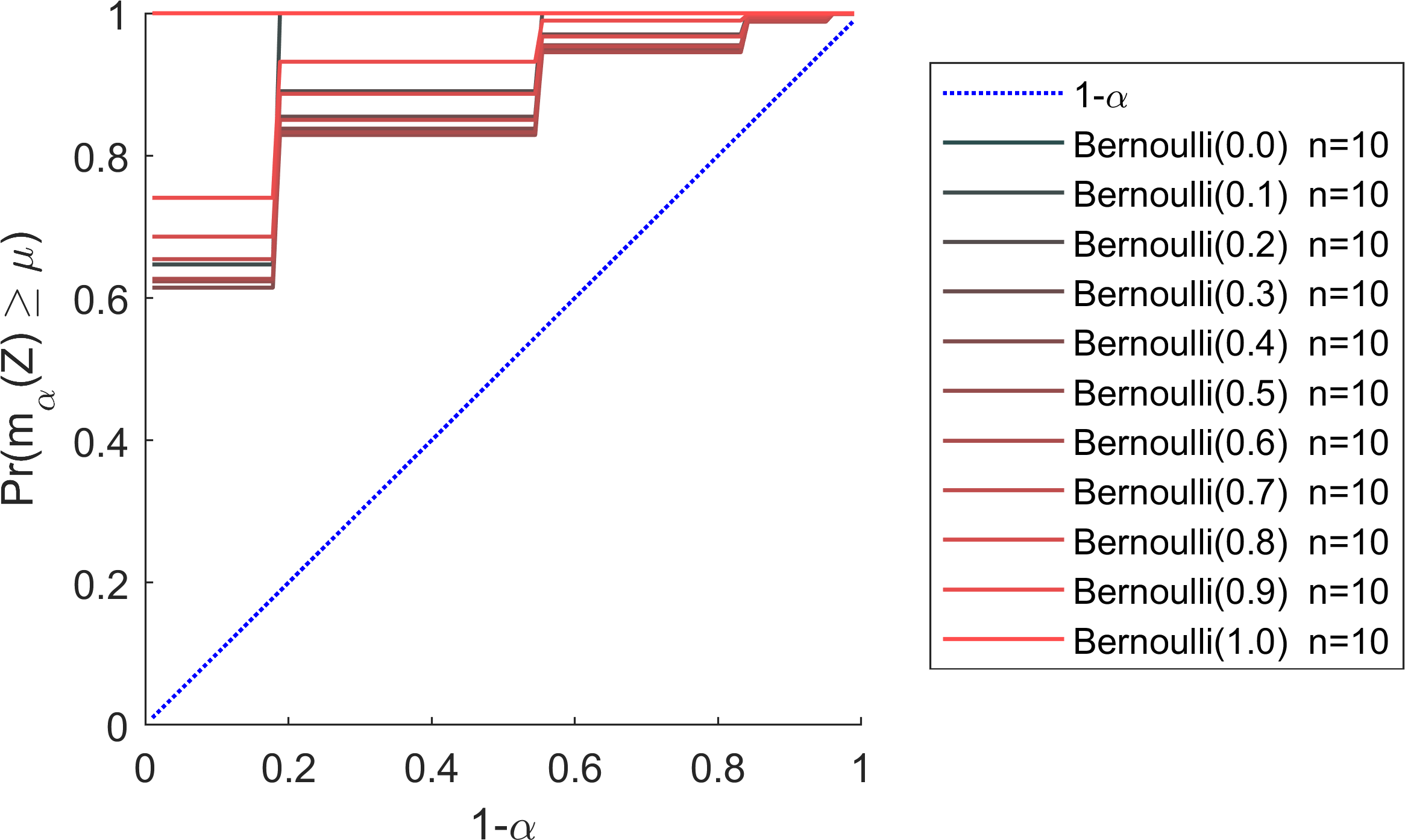}~
    \includegraphics[width=0.45\textwidth]{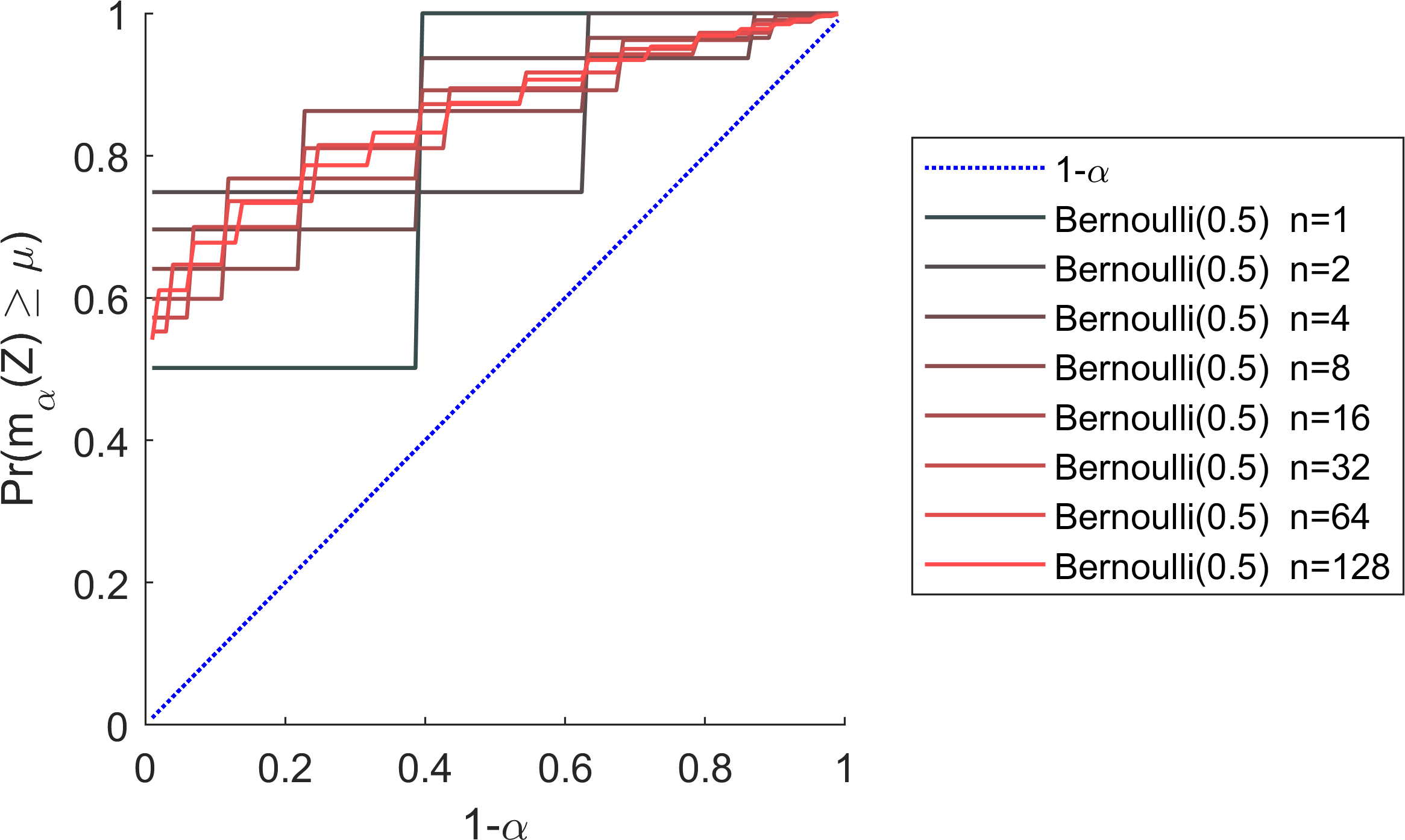}
    \caption{Estimated probability that the high-confidence upper bound produced using Hoeffding's inequality is greater than or equal to the true mean.}
    \label{fig:alphaPlots_Hoeffding}
\end{figure}

Now consider Figure \ref{fig:alphaPlots_ttest}, which is identical to Figure \ref{fig:alphaPlots_Hoeffding}, except that it uses the Student-$t$ interval instead of Hoeffding's inequality. 
This plots shows very different behavior: the curves tend to be much lower, indicating tighter confidence intervals around the sample mean. 
However, the curves often cross the blue line, indicating that in these settings the Student-$t$ interval does \emph{not} provide guaranteed coverage---if you ask for an $0.8$-confidence upper bound, you may only get a $0.6$-confidence upper bound. 
Hence Figure \ref{fig:alphaPlots_ttest} shows the behavior that we would expect of a bound that does \emph{not} have guaranteed coverage, but which provides tight ``high-confidence'' upper bounds.

\begin{figure}[htbp]
    \centering
    \includegraphics[width=0.45\textwidth]{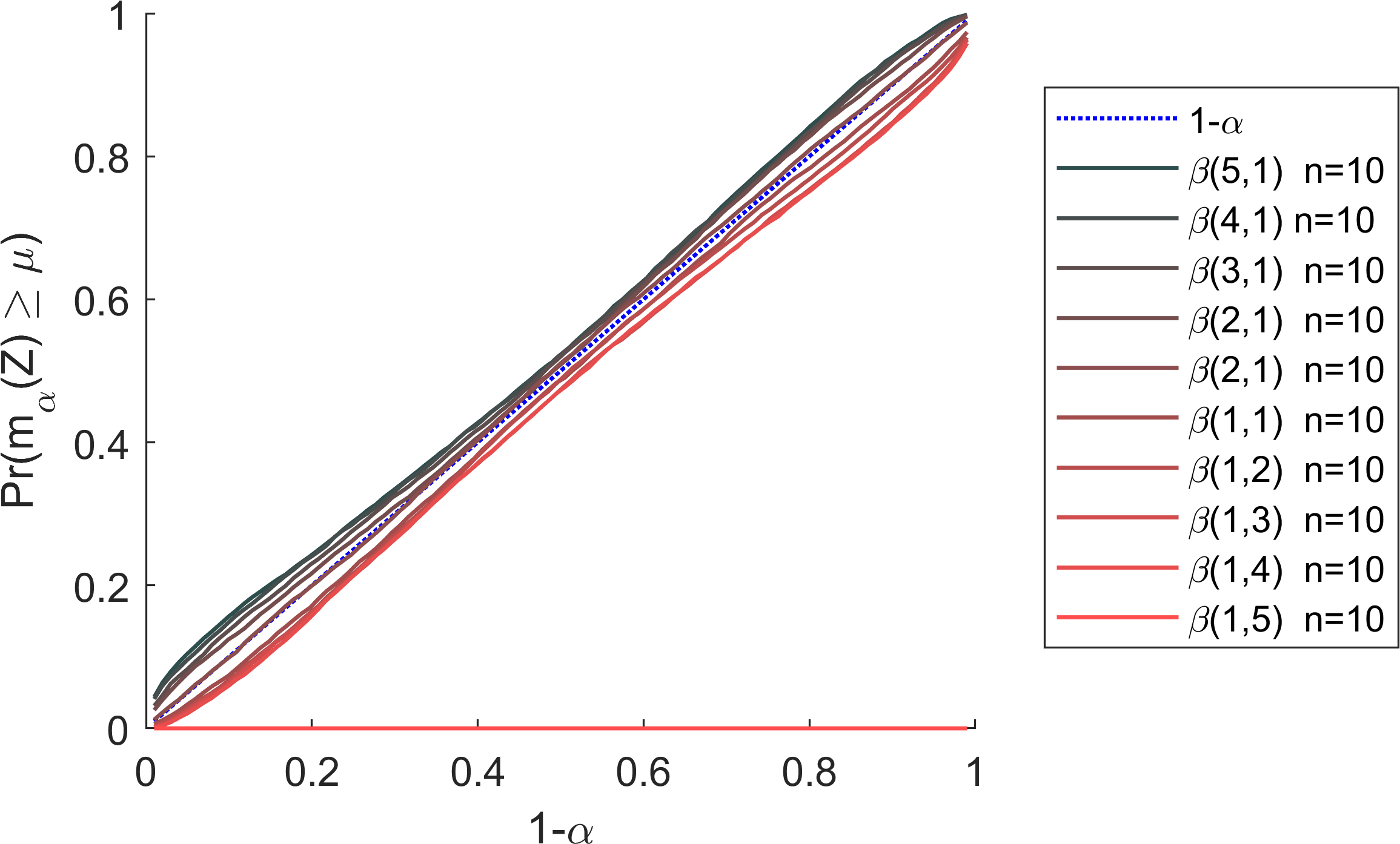}~
    \includegraphics[width=0.45\textwidth]{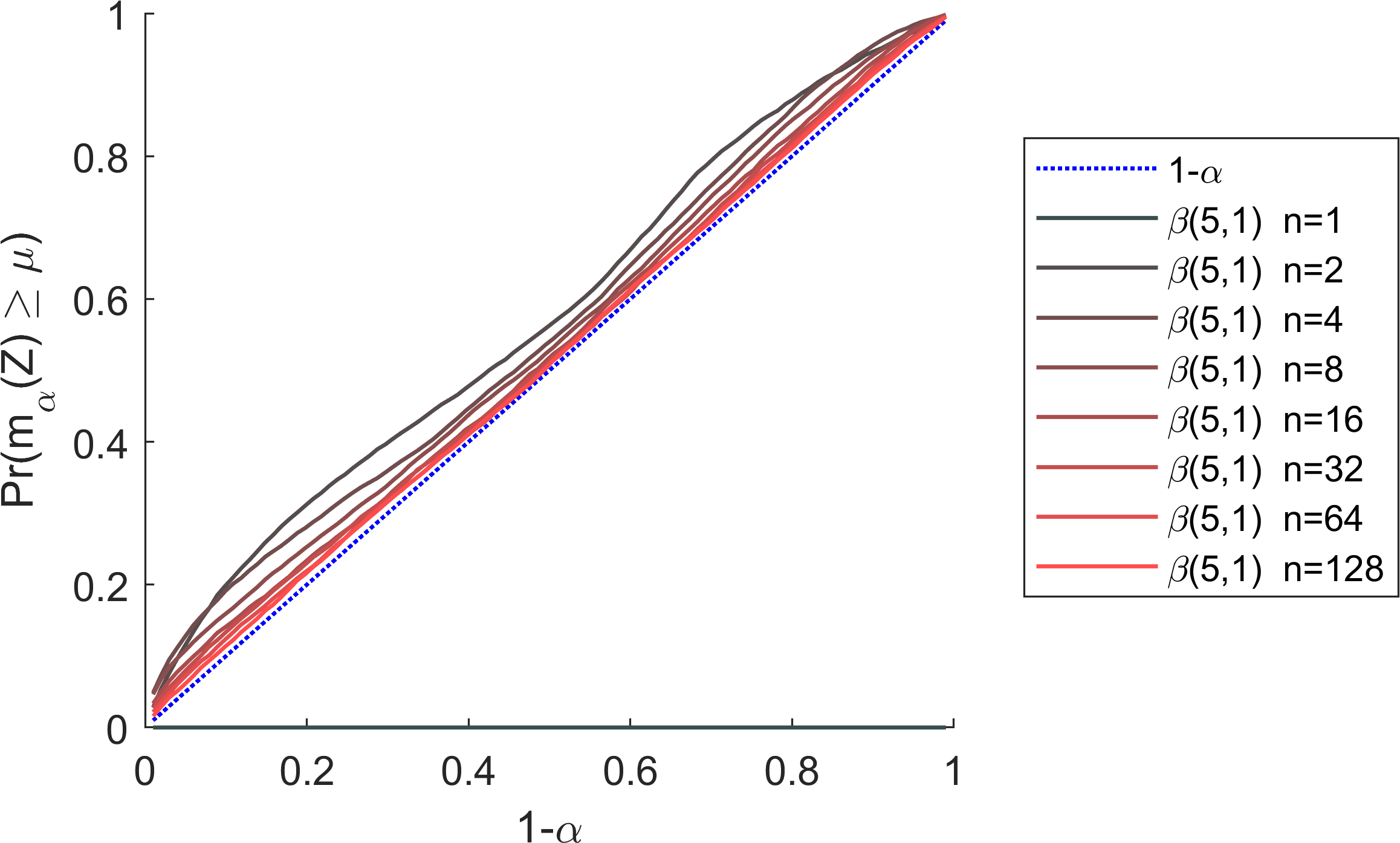}\\
    \includegraphics[width=0.45\textwidth]{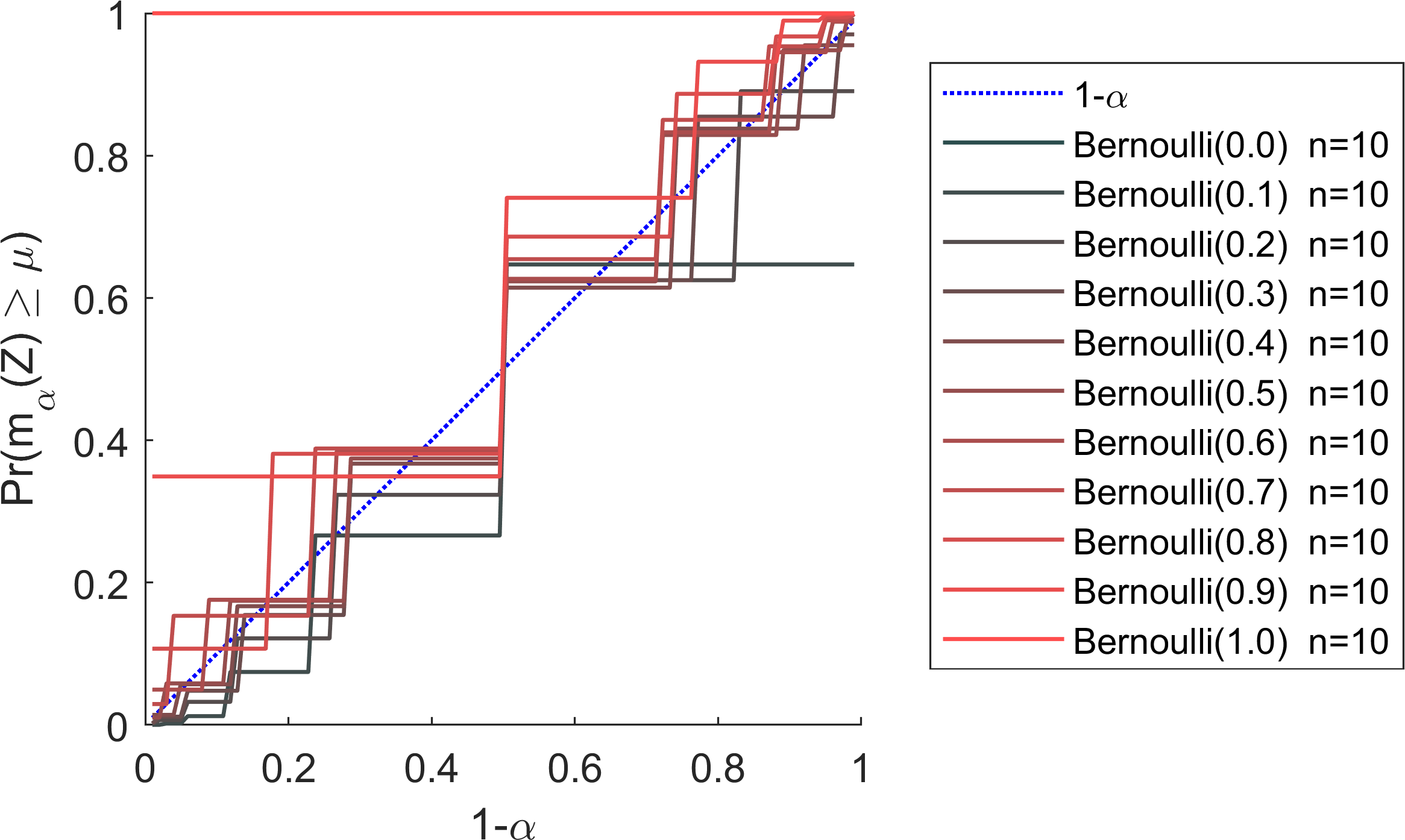}~
    \includegraphics[width=0.45\textwidth]{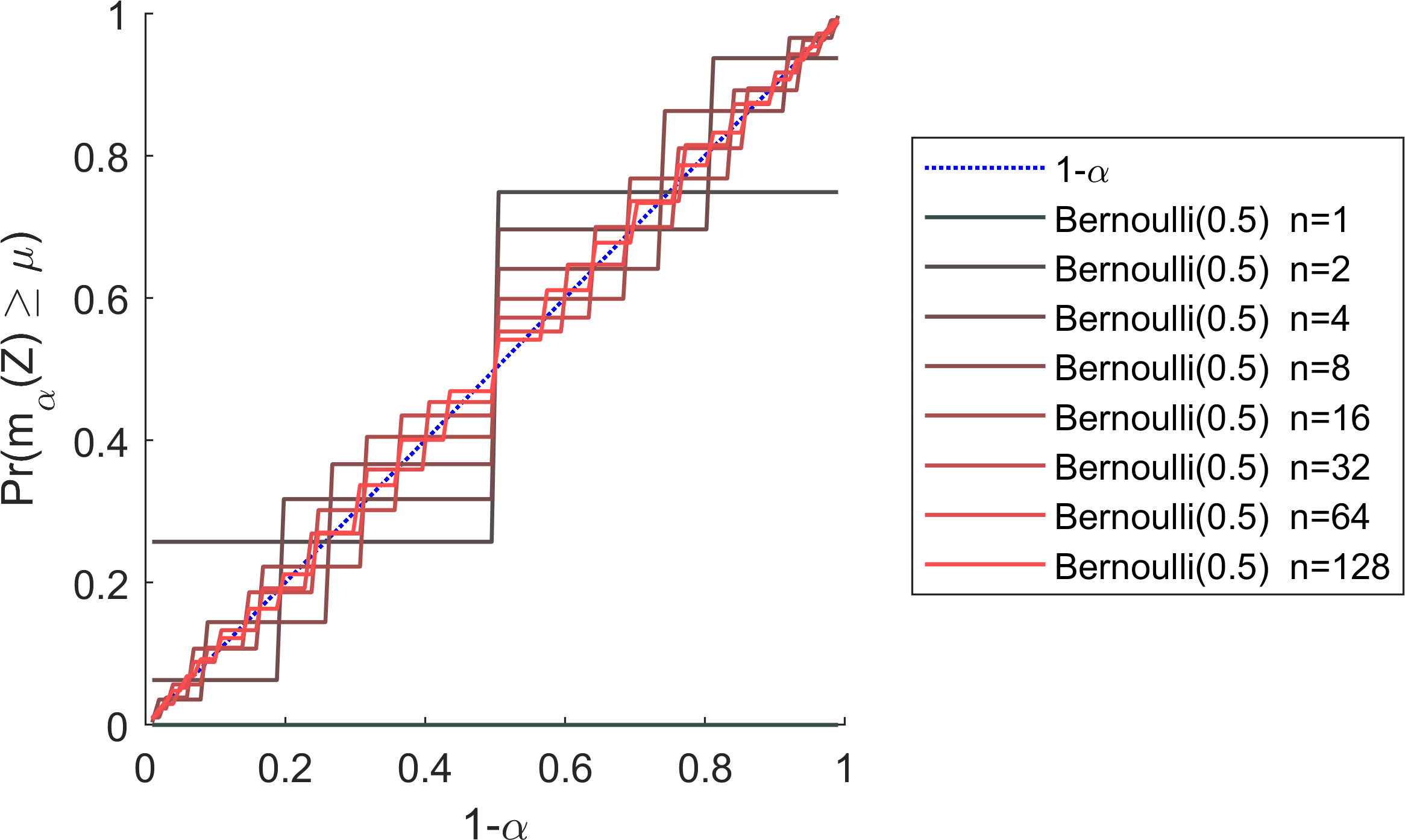}
    \caption{Estimated probability that the high-confidence upper bound produced using the Student-$t$ interval is greater than or equal to the true mean.}
    \label{fig:alphaPlots_ttest}
\end{figure}

The desired behavior of a high-confidence upper bound would blend the desirable properties of Hoeffding's inequality and the Student-$t$ interval. 
In these plots, this would result in curves that \emph{always} remain above the blue curve (guaranteed coverage), but are otherwise as low as possible (tight). 
Figure \ref{fig:alphaPlots} presents the results of this same experiment, conducted using our bound. 
It achieves this desired behavior---it always remains above the blue line (consistent with guaranteed coverage), but tends to be significantly lower than Hoeffding's inequality. 

\begin{figure}[htbp]
    \centering
    \includegraphics[width=0.45\textwidth]{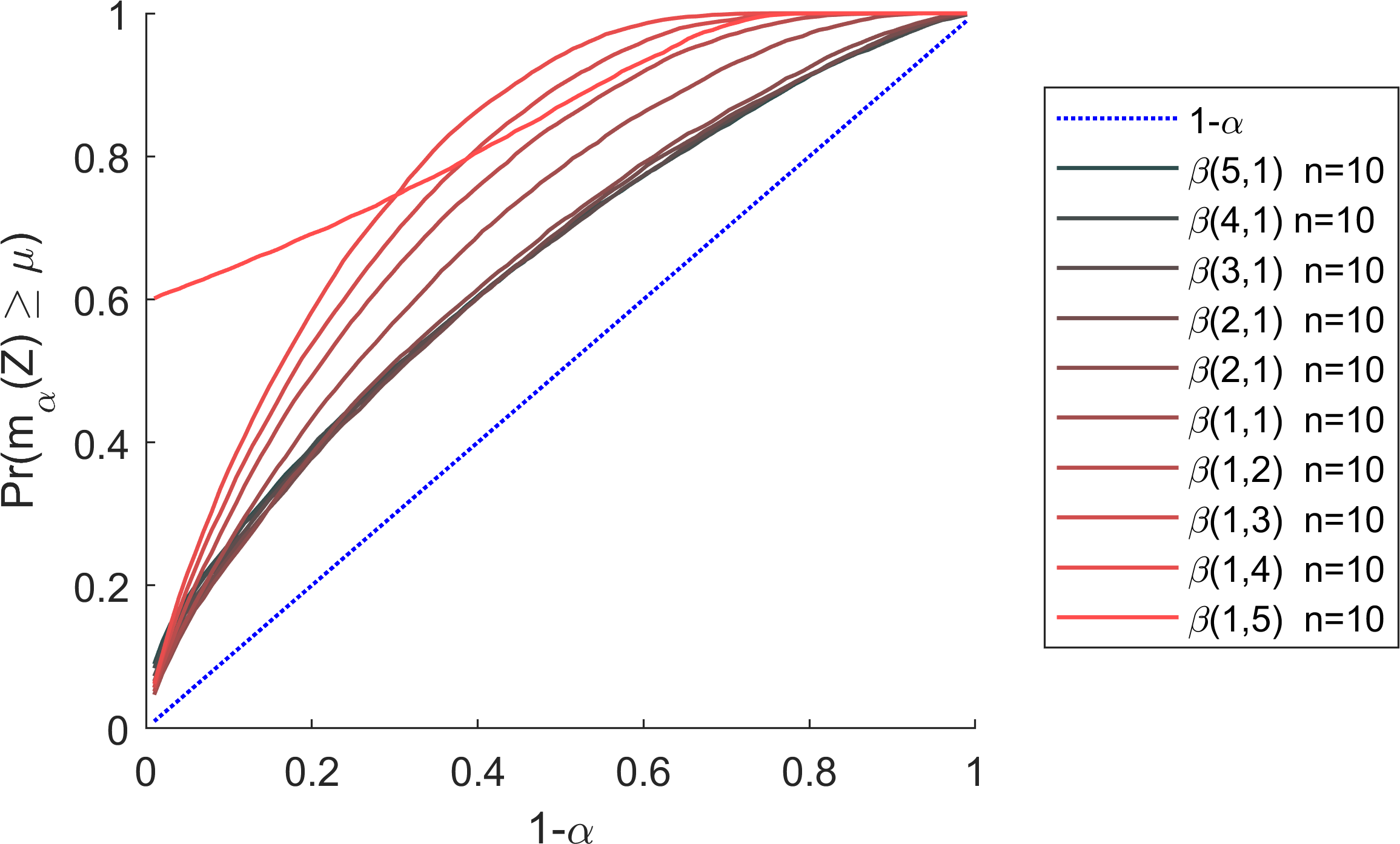}~
    \includegraphics[width=0.45\textwidth]{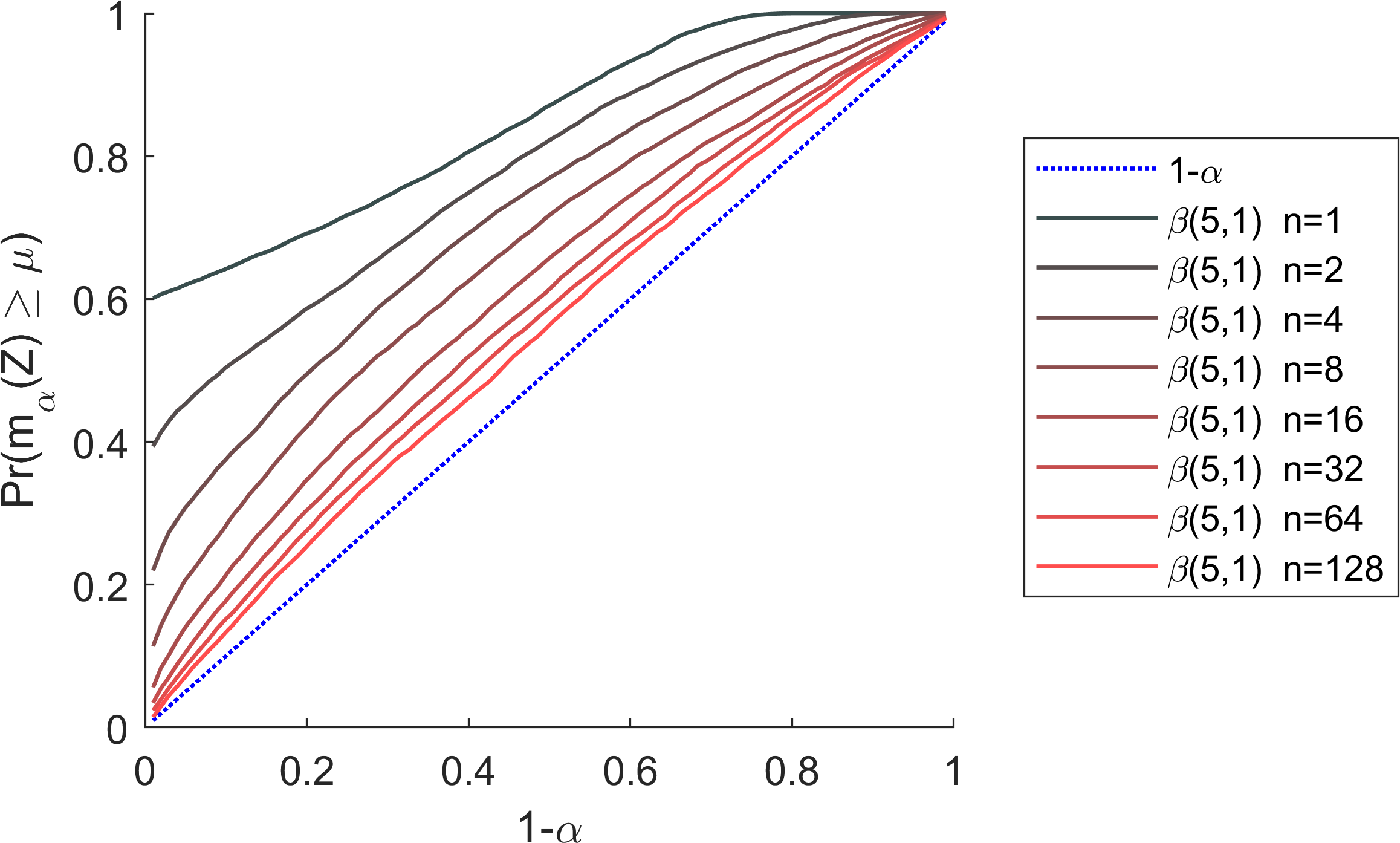}\\
    \includegraphics[width=0.45\textwidth]{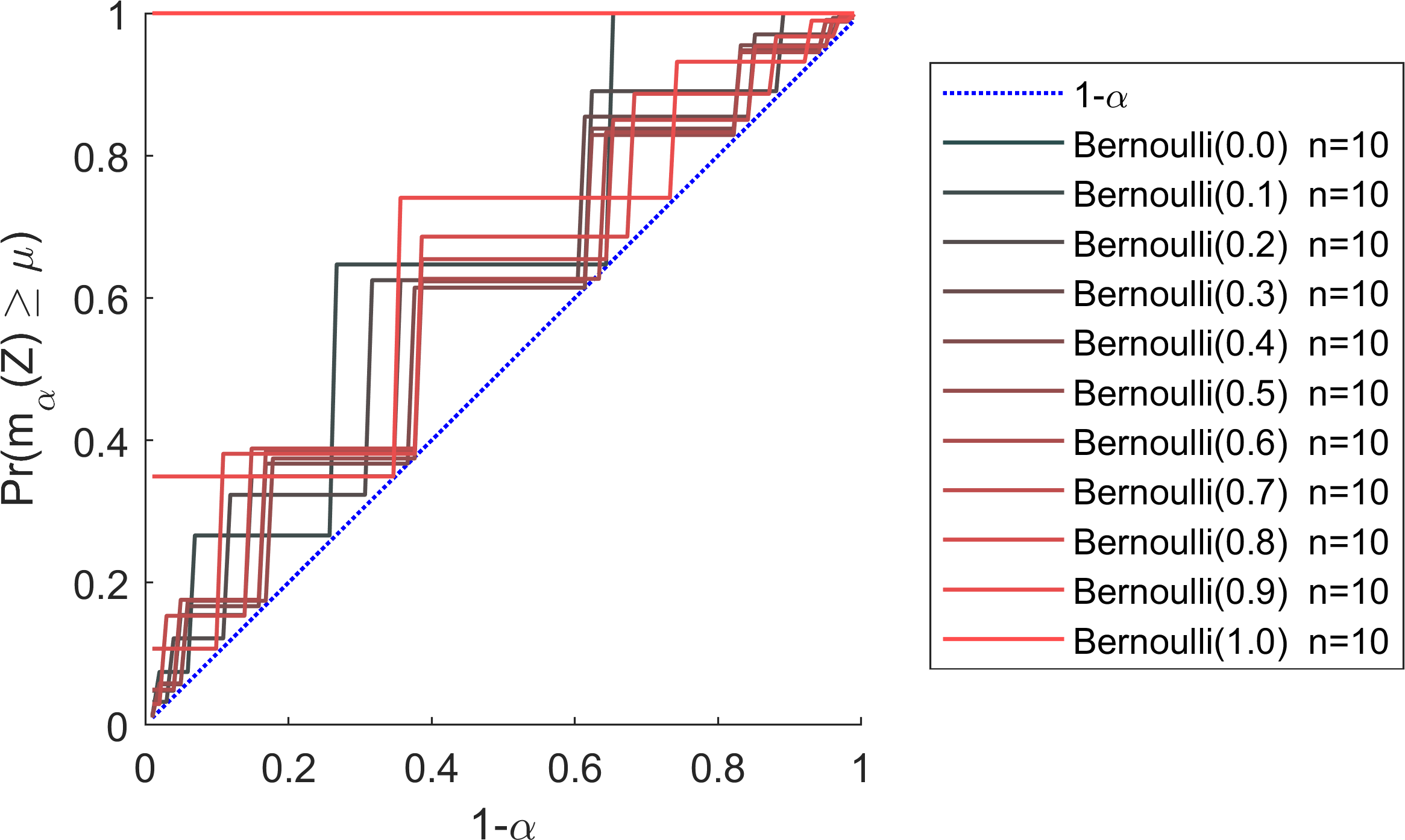}~
    \includegraphics[width=0.45\textwidth]{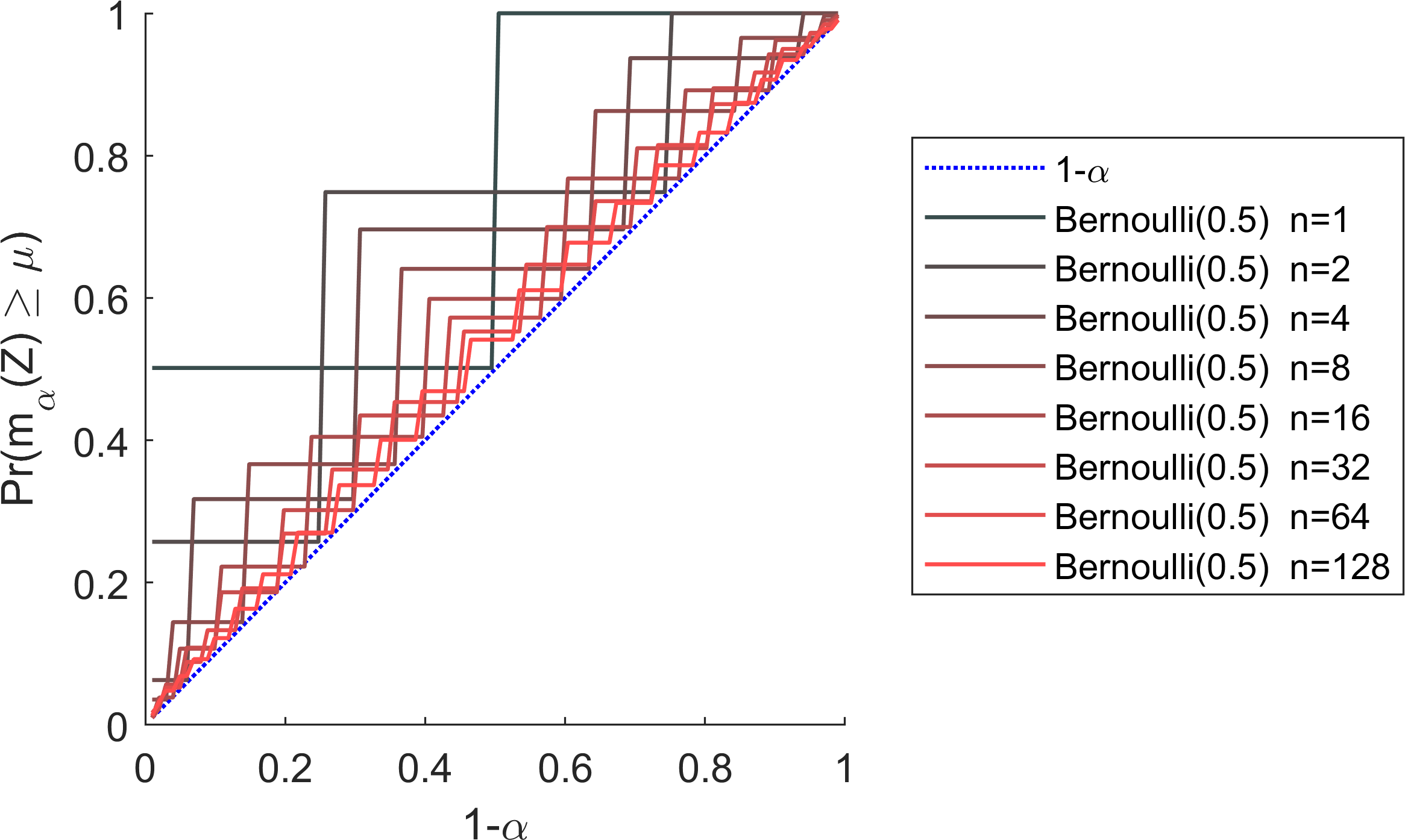}
    \caption{Estimated probability that the high-confidence upper bound produced using our bound is greater than or equal to the true mean.}
    \label{fig:alphaPlots}
\end{figure}

\subsection{Numerical Comparison to Previous Methods}

In this subsection we focus on RQ2 and RQ3 with experiments that compare the tightness of our bound to that of previous methods (both with and without guaranteed coverage). 
A variety of different statistics can be used to capture how tight the high-confidence upper bounds produced by a method are, including the mean upper bound, i.e., $\mathbf{E}[m_\alpha(\Z)]$, and the median upper bound. 
Here we report the mean upper bound: we gather $1,\!000$ samples of $\Z$ from a distribution and compute the upper bounds produced by our bound and several previous methods and report the sample mean of the upper bounds for each method. 
For simplicity, here we vary the distribution and $\N$ but fix $\alpha=0.05$ to obtain $95\%$-confidence upper bounds. 

\begin{figure}[htbp]
    \centering
    \includegraphics[width=0.45\textwidth]{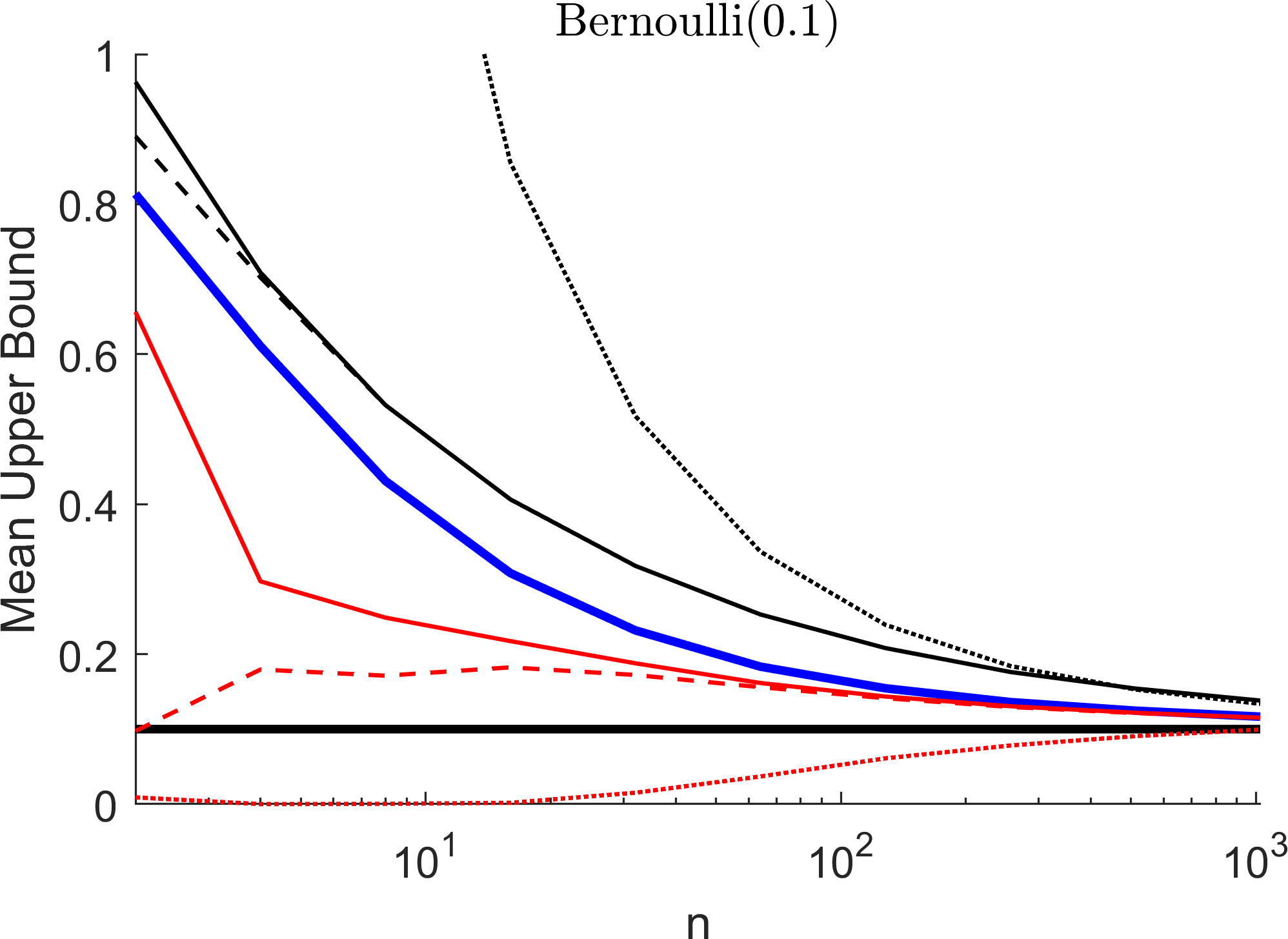}~
    \includegraphics[width=0.45\textwidth]{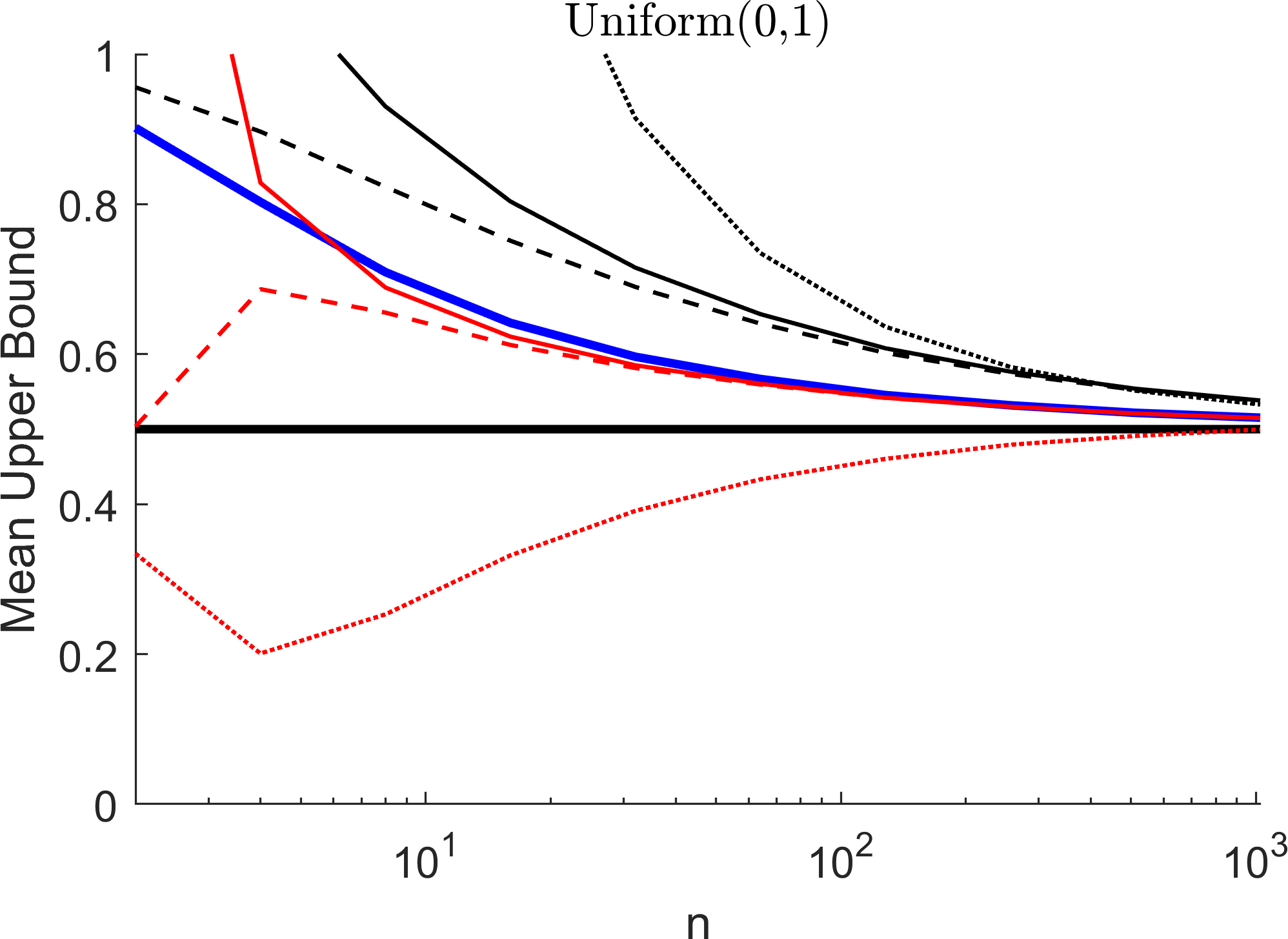}\\\vspace{1cm}
    \includegraphics[width=0.45\textwidth]{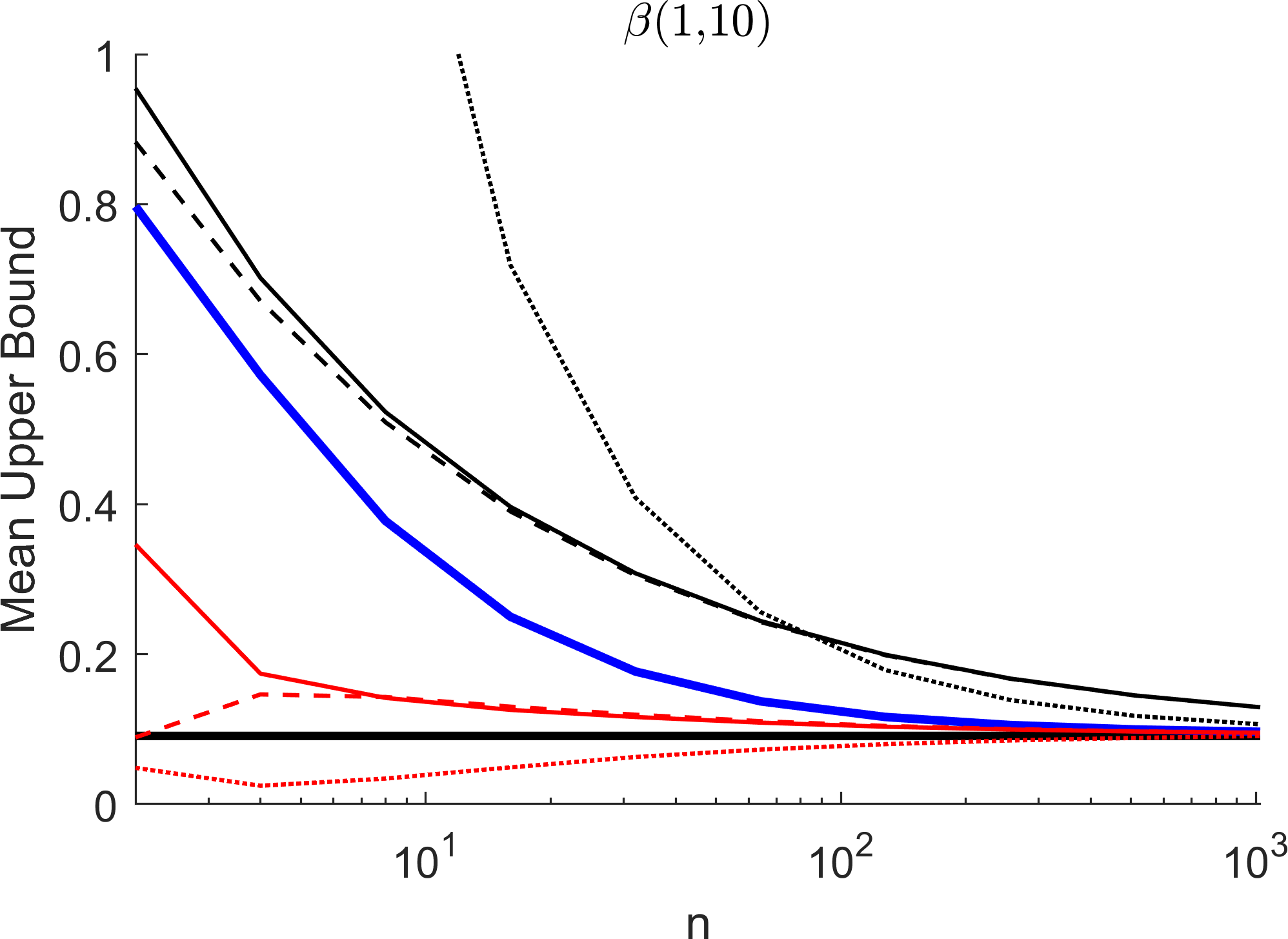}~
    \includegraphics[width=0.45\textwidth]{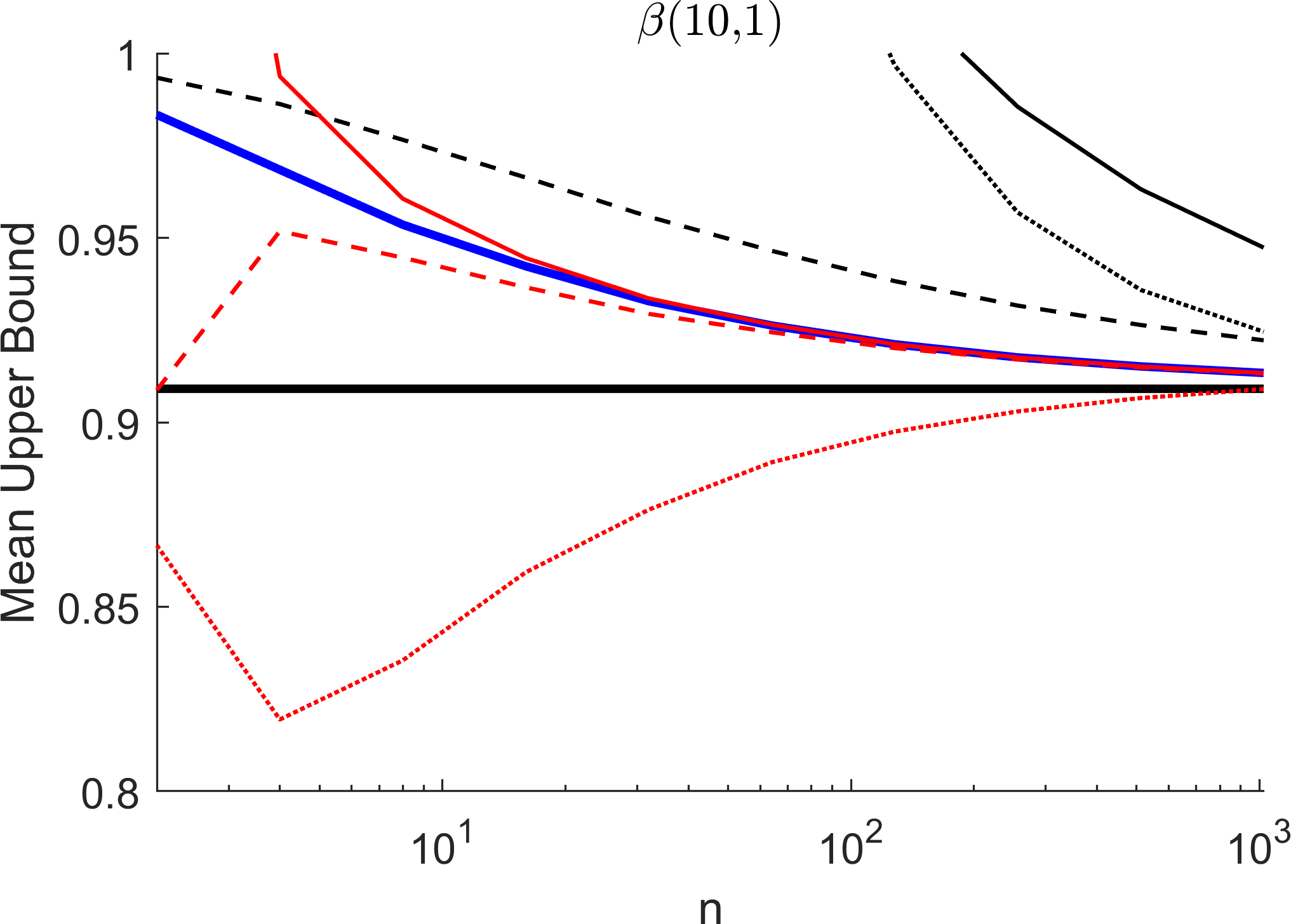} 
    \caption[These plots depict the mean upper bounds (over $1,\!000$ trials for various distributions (the titles on the plots describe the distribution) and using various methods. All figures share the following legend: Rendering Error]{These plots depict the mean upper bounds (over $1,\!000$ trials) for various distributions (the titles on the plots describe the distribution) and using various methods. All figures share the following legend: 
    \includegraphics[width=\textwidth]{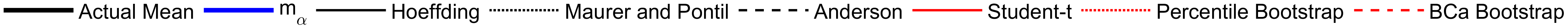}
    }
    \label{fig:nPlots}
\end{figure}

First compare the blue curve (our bound) to the black curves (previous methods with guaranteed coverage), noting that the horizontal axis uses a logarithmic scale. 
In every case, the blue curve remains strictly below the black curves, indicating that in every setting our bound produces lower values on average. 
Notice that frequently our bound obtains mean upper bounds that previous methods require an order of magnitude more samples to achieve, indicating that our bound is a drastic improvement in tightness and/or data efficiency.

Next, compare the blue curve (our bound) to the red curves (previous methods that do \emph{not} have guaranteed coverage). 
The two bootstrap methods do not provide guaranteed coverage, even with normality assumptions. 
So, although they produce tight confidence intervals (as is evident in these plots), the high-confidence bounds that they produce cannot be relied upon.

Next consider the Student-$t$ interval: for the uniform distribution, it produces high-confidence upper bounds that are similar to those produced by our bound. 
When the Student-$t$ interval is computed from normally distributed data, it produces a tighter high-confidence upper-bound than our bound. 
However, when the sampling distribution includes right-skew (e.g., $\beta(1,10)$), the Student-$t$ interval tends to be overly optimistic---it does not have guaranteed coverage (as is evident in Figure \ref{fig:alphaPlots_ttest} with $\beta(1,5)$). 
Hence, although the upper bound of the Student-$t$ interval tends to be lower than those produced by our method for $\beta(1,10)$, it does \emph{not} have guaranteed coverage. 
On the other hand, when the sampling distribution includes left-skew (e.g., $\beta(10,1)$, the Student-$t$ interval is overly-conservative (like Hoeffding's inequality). 
Hence, in Figure \ref{fig:nPlots}, the $\beta(10,1)$ plot indicates that our bound is \emph{tighter} than the Student-$t$ interval. 
This is further evidence that our bound is combining the desirable properties of Hoeffding's inequality and the Student-$t$ interval: it roughly preserves the tightness of the Student-$t$ interval, except in the cases where the Student-$t$ interval is too tight to provide guaranteed coverage (in which case our bound is sufficiently looser to provide guaranteed coverage).

\subsection{Numerical Support for Practical use With $\N < 30$}

Of the many potential uses of our bound, one stands out: it provides a valid method for constructing confidence intervals for scientific studies with fewer than $30$ samples. 
Even though the Student-$t$ interval does \emph{not} have guaranteed coverage when the sampling distribution is not normal, the central limit theorem tells us that the sample mean tends towards a normal distribution as $\N$ increases. 
Hence, the Student-$t$ interval becomes reasonable when $\N$ is large.\footnote{Notice that even with arbitrarily large $\N$, the Student-$t$ interval may \emph{not} have guaranteed coverage, so here saying that the Student-$t$ interval is ``reasonable'' does \emph{not} mean that it has guaranteed coverage.} 
However, without knowing the sampling distribution, it is not clear how large $\N$ must be for the Student-$t$ interval to be reasonable. 
A common rule of thumb used in current scientific research is that $\N$ must be at least $30$. 

This raises the question: what should one do when fewer than $30$ samples are available? 
Our bound provides an answer (assuming our conjecture is true), as it provides confidence intervals of comparable tightness, but with guaranteed support for \emph{any} $\N$ and without any normality assumptions. The only requirement is
the ability to identify limits on the support of the distribution.
To answer RQ3, we present an experiment that shows how our bound can be used to obtain confidence intervals based on fewer than $30$ empirical measurements.

Specifically, we used data from the United States Census from the year 2000 to obtain an estimate of the distribution of people's ages, considering only people zero to 84 years old. 
We then consider the problem of obtaining a tight high-confidence upper bound on the mean age of people ages zero to 84 based on $\N < 30$ samples. 
The results of this experiment are presented in 
Figure \ref{fig:agePlot}, which is a similar form to Figure \ref{fig:nPlots} (but without the logarithmic horizontal axis).  
The key observations from this plot are: \textbf{1)} previous methods with guaranteed coverage are too loose to provide useful high-confidence bounds with so few samples, \textbf{2)} the Student-$t$ interval is sufficiently tight, but it cannot be applied responsibly with such a small $\N$, and \textbf{3)} our bound produces high-confidence bounds that are comparable to the Student-$t$ interval (while maintaining guaranteed coverage, if our conjecture holds). 

\begin{figure}
    \centering
    \includegraphics[width=0.7\textwidth]{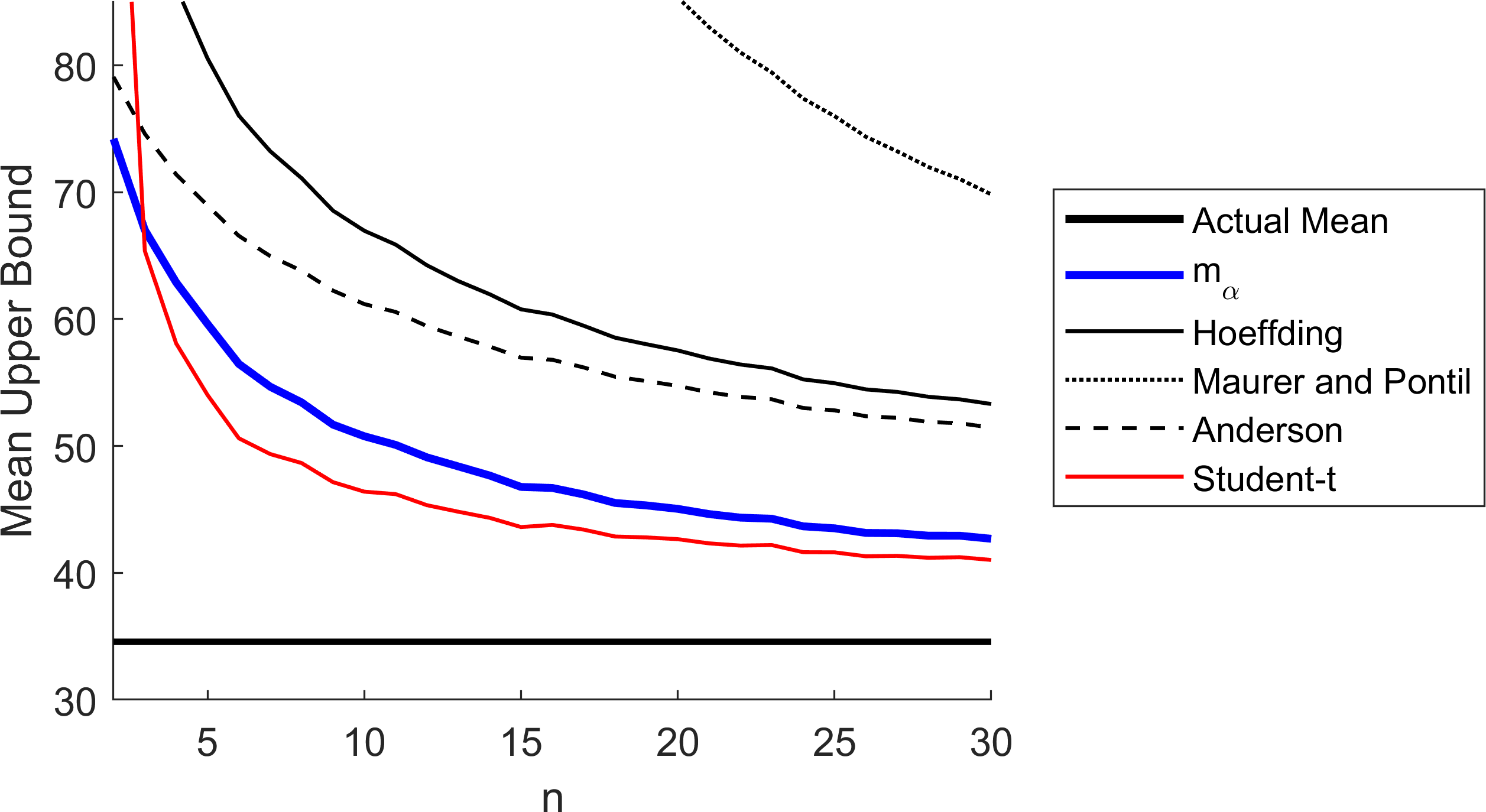}
    \caption{The mean upper bounds (over $1,\!000$ trials) produced by various methods when $\N$ is varied and the sampling distribution is an approximation of the distribution of ages (bounded in $[0,84]$) in the United States in the year 2000.}
    \label{fig:agePlot}
\end{figure}

\section{Acknowledgements}
This work benefitted significantly from conversations with Vince Lysinski and George Bissias, as well as from discussions with Andrew McGregor, Don Towsley, Berthold Horn, Archan Ray, Justin Domke, Gary Huang, Dan Sheldon, Luc Rey-Bellet, and Markos Katsoulakis. Some of this work grew out of early efforts to improve Anderson's bound by Benjamin Mears while at the University of Massachusetts.

\bibliographystyle{abbrvnat}

\end{document}